 \newtheorem{thm}{Theorem}[section]
 \newtheorem{lem}[thm]{Lemma}{\rm}
 \newtheorem{prop}[thm]{Proposition}
 \newtheorem{rem}[thm]{Remark}
 \newtheorem{ex}{Example}\numberwithin{equation}{section}
\def\x{\boldsymbol{x}}
\def\y{\boldsymbol{y}}
\def\u{\boldsymbol{u}}
\def\A{\mathbf{A}}
\def\B{\mathbf{B}}
\def\bxi{\boldsymbol{\xi}}
\def\bzeta{\boldsymbol{\zeta}}
\def\v{\boldsymbol{v}}
\def\p{\boldsymbol{p}}
\def\K{\mathbf{K}}
\def\M{\mathbf{M}}
\def\N{\mathbb{N}}
\def\R{\mathbb{R}}
\def\M{\mathbf{M}}
\def\om{\mathbf{\Omega}}
\def\bxi{\boldsymbol{\xi}}
\def\bmu{\boldsymbol{\mu}}
\def\balpha{\boldsymbol{\alpha}}
\def\bbeta{\boldsymbol{\beta}}
\def\taue{\tau_{\varepsilon}}
\def\tauer{\tau_{\varepsilon/r}}
\begin{document}
\title[A regularized Christoffel function]{A modified Christoffel function and its asymptotic properties}
\author{Jean B. Lasserre}
\address{LAAS-CNRS and Institute of Mathematics\\
University of Toulouse\\
LAAS, 7 avenue du Colonel Roche\\
31077 Toulouse C\'edex 4, France\\
Tel: +33561336415}
\email{lasserre@laas.fr}

\date{}

\thanks{This work is supported by the AI Interdisciplinary Institute ANITI funding through the french program ``Investing for the Future PI3A" under the grant agreement number ANR-19-PI3A-0004. This research is also part of the programme DesCartes and is supported by the National Research Foundation, Prime Minister's Office, Singapore under its Campus for Research Excellence and Technological Enterprise (CREATE) programme.}
\begin{abstract}
 We introduce a certain variant (or regularization) $\tilde{\Lambda}^\mu_n$ of the standard 
 Christoffel function $\Lambda^\mu_n$ associated with a measure $\mu$ on a compact set $\om\subset\R^d$. 
  Its reciprocal is now a sum-of-squares polynomial
 in the variables $(\x,\varepsilon)$, $\varepsilon>0$. It shares the same dichotomy
 property of the standard Christoffel function, that is, the growth with $n$ of its inverse is at most polynomial inside  and exponential outside the support of the measure.
 Its distinguishing and crucial feature states that for fixed $\varepsilon>0$, and under weak assumptions, 
 $\lim_{n\to\infty} \varepsilon^{-d}\tilde{\Lambda}^\mu_n(\bxi,\varepsilon)=f(\bzeta_\varepsilon)$ where $f$ (assumed to be continuous) is the unknown 
 density of $\mu$ w.r.t. Lebesgue measure on $\om$, and
 $\bzeta_\varepsilon\in\B_\infty(\bxi,\varepsilon)$ (and so $f(\bzeta_\varepsilon)\approx f(\bxi)$ when $\varepsilon>0$ is small).
 This is in contrast
 with the standard Christoffel function where if $\lim_{n\to\infty} n^d\Lambda^\mu_n(\bxi)$
 exists, it is of the form  $f(\bxi)/\omega_E(\bxi)$ 
 where $\omega_E$ is the density of the equilibrium measure of $\om$, usually unknown.
  At last but not least, the additional computational burden (when compared to computing $\Lambda^\mu_n$) is just integrating symbolically the monomial basis $(\x^{\balpha})_{\balpha\in\N^d_n}$
 on the box $\{\x: \Vert \x-\bxi\Vert_\infty<\varepsilon/2\}$, so that $1/\tilde{\Lambda}^\mu_n$
 is obtained as an explicit polynomial of $(\bxi,\varepsilon)$.
\end{abstract}
\maketitle

\section{Introduction}

The Christoffel function $\Lambda^\mu_n:\R^d\to\R_+$ and the Christoffel-Darboux kernel are both associated with a measure $\mu$ whose support $\om\subset\R^d$
is here a compact set, and are indexed by a ``degree" $n$. They originate in the theory of approximation and orthogonal polynomials. Among its interesting features:

$\bullet$  (1) The Christoffel function identifies the support of $\mu$ by a dichotomy of its behavior as $n$ grows, depending on whether the point 
is inside or outside the support of the measure. Namely, inside the support the growth of its reciprocal  is at most polynomial in the degree $n$, and at least exponential outside the support.

$\bullet$ (2) Under some regularity assumptions,  the Christoffel function is also related
to the density $f$ of $\mu$ w.r.t. Lebesgue measure on $\om$. Indeed then ${d+n\choose d}\Lambda^\mu_n\to f/\omega_E$ pointwise in
$\om$ (and uniformly on compact subsets of $\om$) where $\omega_E$ is a so-called \emph{equilibrium measure} of $\om$. 
However in general the equilibrium measure is unknown, except in a few special cases of sets $\om$ with special geometry; see e.g. \cite[\S 9.7 ]{dunkl-xu}, \cite{Kroo-a,Kroo}, \cite{Xu}.

$\bullet$ (3) Its explicit expression as a rational function, is easy to compute, e.g. via
the inverse of the moment matrix of $\mu$ (provided that its size is reasonable and compatible with state-of-the-art linear algebra softwares).

As advocated in \cite{lass-book}, in addition to being a mathematical object  interesting in its own, the Christoffel function (CF)  turns out 
to also provide an efficient and easy-to-use tool to help solve some problems in data analysis, e.g. outlier detection,
support inference, and density estimation. In such problems, the  underlying measure of interest $\mu$ is usually not available and one has rather access to a \emph{finite} sample of data points generated according to $\mu$. 
Then in lieu of the unknown $\mu$, one uses the empirical measure associated with the cloud of finitely many data points.
Remarkably, and even though the geometry of the support of the empirical measure is quite trivial, 
its associated CF is still close (in a certain sense) to that of $\mu$, and inherits interesting features of $\mu$ that can be exploited in data analysis; see e.g. 
\cite{nips,cras-classif,lass-book,advc}.

Moreover, some recent works 
have revealed additional properties of the CF, as well as connections (in the author's opinion some even surprising) with
seemingless disconnected topics, e.g., convex duality, certificates of positivity in real algebraic geometry, Pell's equation, equilibrium measure of a compact set; see \cite{cras-disintegration,pell}.

For more details on the Christoffel-Darboux kernel and the Christoffel function, the interested reader is referred to e.g. 
\cite{dunkl-xu,lass-book,Nevai,Simon,Totik,Xu} and  many references therein.\\

\subsection*{Contribution}~
Our motivation is essentially concerned with point (2) above.
Indeed our main goal is to still recover the density $f$ of $\mu$ asymptotically, but without
the annoying factor $\omega_E$ of the equilibrium measure because it is unknown 
in general. But simultaneously, we also want to maintain the support inference capability in (1) as well as 
an efficient computation (point  (3) above).

To achieve these goals, we introduce the following variant (or regularization) of the Christoffel function, namely the function 
$\tilde{\Lambda}^\mu_n:\R^d\times\R\to\R_+$ defined by:
\begin{equation}
\label{intro-1}
 \tilde{\Lambda}^\mu_n(\bxi,\varepsilon)
 \,:=\,\,\{ \inf_{p\in\R[\x]_n}\,\{\, \int p^2\,d\mu:\quad \int_{\B_{\infty}(\bxi,\varepsilon)}p(\x)\,\frac{d\x}{\varepsilon^d}\,=\,1\,\}\,,
 \end{equation}
for all $(\bxi,\varepsilon)\in\R^d\times\R_+$ and $n\in\N$, where 
$\B_{\infty}(\bxi,\varepsilon):=\{\x:\Vert\x-\bxi\Vert_\infty<\varepsilon/2\}$ (with Lebesgue volume $\varepsilon^d$).

As we will see, $1/\tilde{\Lambda}^\mu_n$ is obtained as an \emph{explicit} polynomial of
$(\bxi,\varepsilon)$, and can be also seen as a
a polynomial of $\bxi$, parametrized by $\varepsilon>0$ fixed.
In particular, we are interested in the asymptotic behaviors of
$\tilde{\Lambda}^\mu_n(\bxi,\varepsilon)$ with $\varepsilon>0$ fixed, as well as
$\tilde{\Lambda}^\mu_n(\bxi,1/n)$, as $n$ grows.\\

More precisely, let $\om\subset\R^d$ be compact with nonempty interior, and let $\mu$
be a Borel measure on $\om$ with density $f$ with respect to (w.r.t.) Lebesgue measure on $\om$.
In particular, for every $n\in\N$, the moment matrix $\M_n(\mu)$ is non singular.
We prove that $\tilde{\Lambda}^\mu_n$ in \eqref{intro-1} has the following properties:

$\bullet$ $1/\tilde{\Lambda}^\mu_n$ is a sum of squares polynomial with explicit form:
\begin{equation}
\label{intro-2}
(\bxi,\varepsilon)\mapsto \tilde{\Lambda}^\mu_n(\bxi,\varepsilon)^{-1}\,=\,\tilde{\v}_n(\bxi,\varepsilon)^T\M_n(\mu)^{-1}
\tilde{\v}_n(\bxi,\varepsilon)\,,\end{equation}
where, with $\v_n(\x)$ being the vector of the monomial basis $(\x^{\balpha})_{\balpha\in\N^d_n}$,
\[\tilde{\v}_n(\bxi,\varepsilon)\,:=\,
\int_{\B_{\infty}(\bxi,\varepsilon)}\v_n(\x)\,\frac{d\x}{\varepsilon^d}\,.\]
Importantly, each entry $\tilde{\v}_{n,\balpha}$, $\balpha\in\N^d_n$, of the vector $\tilde{\v}_n$, is obtained explicitly as a 
\emph{polynomial} in $(\bxi,\varepsilon)$.

$\bullet$ With $\varepsilon>0$ fixed, and assuming that $f$ is bounded and $f>0$ on $\om$, 
\begin{equation}
\label{intro-3}
\lim_{n\to\infty}\varepsilon^d\,\tilde{\Lambda}^\mu_n(\bxi,\varepsilon)^{-1}\,=\,
\int_{\B_{\infty}(\bxi,\varepsilon)}\frac{1}{f}\,\frac{d\x}{\varepsilon^d}\,=\,\varepsilon^d\,
\Vert \frac{1_{\B_\infty(\bxi,\varepsilon)}}{\varepsilon^d\,f}\Vert^2_{L^2(\mu)}\,,\end{equation}
for all $\bxi$ with $\B_{\infty}(\bxi,\varepsilon)\subset\om$.  Moreover, if $f$ is continuous then
\begin{equation}
\label{intro-33}
\lim_{n\to\infty}\varepsilon^{-d}\,\tilde{\Lambda}^\mu_n(\bxi,\varepsilon)\,=\,
f(\bzeta_\varepsilon)\,,
\end{equation}
for some $\bzeta_\varepsilon\in\B_\infty(\bxi,\varepsilon)$, and so $f(\bzeta_\varepsilon)\approx f(\bxi)$ if $\varepsilon$ is small enough.

$\bullet$ Finally, and as for the standard CF $\Lambda^\mu_n$,
the growth with $n$ of $\tilde{\Lambda}^\mu_n(\bxi,\varepsilon)^{-1}$ ($\varepsilon>0$ fixed) and
$\tilde{\Lambda}^\mu_n(\bxi,1/n)^{-1}$,
is at most polynomial if $\bxi\in\mathrm{int}(\om)$  and at least exponential
if $\bxi\not\in\om$.\\ 

So the extended CF $\tilde{\Lambda}^\mu_n$ essentially shares same important features as the standard  CF with the advantage 
that the limit of $\varepsilon^{-d}\tilde{\Lambda}^\mu_n(\bxi,\varepsilon)$ ($\varepsilon>0$ fixed) as $n$ increases, is 
close to $f(\bxi)$ where $f$ is the density of $\mu$, a highly desirable feature. 
Moreover, the 	additional price to pay to obtain its explicit form \eqref{intro-2} as a function of $(\bxi,\varepsilon)$, is rather negligible 
as it only requires the closed form expression of $\int_{\B_\infty(\bxi,\varepsilon)}\x^{\balpha}d\x$,
for all $\balpha\in\N^d_n$.

\subsection*{Interpretation} ~For fixed $\varepsilon>0$, 
the function $\bxi\mapsto \tilde{\Lambda}^\mu_n(\bxi,\varepsilon)$ 
has a simple interpretation.  Let $f$ be the density of $\mu$ with respect to Lebesgue measure on $\om$ and assume
that $f\geq\gamma>0$ on $\om$.
Let $\ell^\varepsilon_{\bxi}$ be the linear functional $h\mapsto \ell^\varepsilon_{\bxi}(h):=\int  h\,d\lambda$ where $\lambda$ is the uniform probability mesure on 
$\B_\infty(\bxi,\varepsilon)$. If $\bxi\in\mathrm{int}(\om)$ is such that $\B_\infty(\bxi,\varepsilon)\subset\om$, then
$\ell^\varepsilon_{\bxi}\in L^2(\mu)$ and  is the function 
$\x\mapsto 1_{\B_\infty(\bxi,\varepsilon)}(\x)/\taue\,f\in L^2(\mu)$ where $\taue=\mathrm{vol}(\B_\infty(\bxi,\varepsilon))$.
Next, let $\ell^\varepsilon_{\bxi,n}\in\R[\x]_n$ be the orthogonal projection of $\ell^\varepsilon_{\bxi}$
on the finite-dimensional subspace $\R[\x]_n$ of  $L^2(\mu)$.
Then $\tilde{\Lambda}^\mu_n(\bxi,\varepsilon)=1/\Vert\ell^\varepsilon_{\bxi,n}\Vert^2_{L^2(\mu)}$ with limit \eqref{intro-3} as $n\to\infty$. 

$\tilde{\Lambda}^\mu_n(\bxi,\varepsilon)$
can be seen as an $\varepsilon$-regularization of $\Lambda^\mu_n(\bxi)$, where instead
of working with the point evaluation at $\bxi$ (i.e., the Dirac measure $\delta_{\bxi}$ 
which is not in $L^2(\mu)$), one rather works with
the element $\ell^\varepsilon_{\bxi}$ of some neighborhood $\mathcal{N}(\delta_{\bxi},\eta)$ of $\delta_{\bxi}$ (in the weak-$\star$ topology of  the space of finite Borel signed measures)
because it can also be considered as an element of $L^2(\mu)$. In doing so with $\varepsilon>0$ fixed, $\lim_{n\to\infty}\varepsilon^{-d}\tilde{\Lambda}^\mu_n(\bxi,\varepsilon)$ exists (which is not the case for $\Lambda^\mu_n(\bxi)$) and is close to $f(\bxi)$ if $\varepsilon$ is small and $f$ is continuous, a highly desirable feature.

\section{Main result}

\subsection{Notation and definitions}

Let $\R[\x]$ denote the ring of real polynomials in the variables $\x=(x_1,\ldots,x_d)$ and $\R[\x]_n\subset\R[\x]$ be its subset 
of polynomials of total degree at most $n$. 
Let $\N^d_n:=\{\balpha\in\N^d:\vert\balpha\vert\leq n\}$
(where $\vert\balpha\vert=\sum_i\alpha_i$) with cardinal 
$s(n)={d+n\choose d}$. Let $\v_n(\x)=(\x^{\balpha})_{\balpha\in\N^d_n}$ 
be the vector of monomials up to degree $n$, 
and let $\Sigma[\x]_n\subset\R[\x]_{2n}$ be the convex cone of polynomials of total degree at most $2n$ which are sum-of-squares (in short SOS). For every $p\in\R[\x]_n$ write
\[\x\mapsto p(\x)\,=\,\langle\p,\v_n(\x)\rangle\,,\quad\forall \x\in\R^d\,,\]
where $\p\in\R^{s(n)}$ is the vector of coefficients of $p$ in the monomial basis $(\x^{\balpha})_{\balpha\in\N^d}$.
For a real symmetric matrix 
$\A=\A^T$ the notation $\A\succeq0$ (resp. $\A\succ0$) stands for $\A$ is positive semidefinite (p.s.d.) (resp. positive definite (p.d.)).

The support of a Borel measure $\mu$ on $\R^d$ is the smallest closed set $A$ such that
$\mu(\R^d\setminus A)=0$, and such a  set $A$ is unique. With $S\subset\R^d$ compact, denote by $\mathscr{C}(S)$ 
the Banach space of real continuous functions on $S$ equipped with the sup-norm. Its topological dual
$\mathscr{C}(S)^*$ is the Banach space $\mathscr{M}(S)$
of finite signed Borel measures on $S$, 
equipped with the total-variation norm.

\subsection*{Moment matrix.} 

Let $\mu$ be a finite Borel measure on $\R^d$ with all moments $\bmu=(\mu_{\balpha})_{\balpha\in\N^d}$
assumed to be finite.
The (degree-$n$) moment matrix $\M_n(\mu)$ associated with $\mu$ is the real symmetric matrix 
with rows and columns indexed by $\N^d_n$ (hence of size $s(n)$), and with entries
\[\M_n(\mu)(\balpha,\bbeta)\,:=\,\int \x^{\balpha+\bbeta}\,d\mu\,=\,\mu_{\balpha+\bbeta}\,,\quad\balpha,\bbeta\in\N^d_n\,.\]
Obviously, $\M_n(\mu)\succeq0$ for all $n$ since 
\[\langle \p,\M_n(\mu)\,\p\rangle\,=\,\int p^2\,d\mu\,\geq\,0\,,\quad\forall p\in\R[\x]_n\,.\]

\subsection*{Christoffel function.} 

We here assume that $\M_n(\mu)\succ0$ for all $n\in\N$, and therefore the inverse
$\M_n(\mu)^{-1}$ is well-defined for all $n\in\N$. In particular this is true 
in our case of interest, i.e.,  when the support $\om\subset\R^d$ of $\mu$ is compact with nonempty interior and $\mu$ has 
a density w.r.t. Lebesgue measure on $\om$.

The (degree-$n$) Christoffel function $\Lambda^\mu_n:\R\to\R_+$, associated with $\mu$, is defined by:
\[\x\mapsto \Lambda^{\mu}_n(\x)^{-1}\,:=\,\v_n(\x)^T\M_n(\mu)^{-1}\v_n(\x)\,,\quad\forall \x\in\R^d\,.\]
Alternatively, if $(P_{\balpha})_{\balpha\in\N^d}\subset\R[\x]$ is a family of polynomials which are orthonormal with 
respect to $\mu$, then
\begin{equation}
 \label{ortho-poly-0}
\Lambda^{\mu}_n(\x)^{-1}\,=\,\sum_{\balpha\in \N^d_n}P_{\balpha}(\x)^2\,,\quad\forall\x\in\R^d\,.
\end{equation}
The Christoffel function has also a variational formulation. Namely:
\begin{equation}
 \label{eq:variational}
 \Lambda^\mu_n(\bxi)\,=\,\inf_{p\in\R[\x]_n}\,\{\,\int p^2\,d\mu:\quad p(\bxi)\,=\,1\,\}\,,\quad\forall\bxi\in\R^d\,.
\end{equation}
Problem \eqref{eq:variational} is a quadratic convex optimization problem that can be solved efficiently. Its
unique optimal solution $p^*\in\R[\x]_n$ reads:
\[\x\mapsto p^*(\x)\,:=\,\frac{\sum_{\balpha\in\N^d_n}P_{\balpha}(\bxi)P_{\balpha}(\x)}
{\sum_{\balpha\in\N^d_n}P_{\balpha}(\bxi)^2}\,=\,\frac{K^\mu_n(\bxi,\x)}{K^\mu_n(\bxi,\bxi)}\,,\quad\x\in\R^d\,,\]
where $K^\mu_n:\R^d\times\R^d\to\R$, defined by:
\begin{equation}
\label{def-CD-kernel}
(\x,\y)\mapsto K^\mu_n(\x,\y)\,:=\,\sum_{\balpha\in\N^n_t}P_{\balpha}(\x)\,P_{\balpha}(\y)\,,\quad \x,\y\in\R^d\,,
\end{equation}
is the Christoffel-Darboux kernel associated with $\mu$. In particular
\[\Lambda^\mu_n(\bxi)^{-1}\,=\,K^\mu_n(\bxi,\bxi)\,,\quad\forall \bxi\in\R^d\,.\]

\subsection{A regularization and parametrization of the Christoffel function}

Let $\mu$ be a finite Borel probability measure on a compact set $\om\subset\R^d$, with density $f$
w.r.t. Lebesgue measure on $\om$, 
i.e., $d\mu=1_\om(\x)f(\x)d\x$ and 
$\int_\om f(\x)\,d\x=1$. Let $L^2(\om,\mu)$ (in short, $L^2(\mu)$) be the usual Hilbert space
of square integrable functions w.r.t. $\mu$.
Next, given $\varepsilon>0$ and $\bxi\in\R^d$, let 

\begin{eqnarray}
\label{def-Binf}
\B_{\infty}(\bxi;\varepsilon)&:=&\{\x\in\R^d: \Vert \x-\bxi\Vert_\infty\leq\varepsilon/2\}\\
\label{def-tau-inf}
\tau_{\varepsilon}&:=&\mathrm{vol}(\B_{\infty}(\bxi,\varepsilon))\,=\,\varepsilon^d\\
\label{def-1}
 d\phi_{\bxi}^{\varepsilon}&=&\frac{1_{\B_{\infty}(\bxi,\varepsilon)}(\x)\,d\x}{\tau_\varepsilon}\,,\quad\varepsilon>0\,;\quad\phi_{\bxi}^0\,:=\,\delta_{\{\bxi\}}\,.
\end{eqnarray}
\begin{prop}
\label{prop1}
With $\phi_{\bxi}^{\varepsilon}$ as \eqref{def-1},
$\phi_{\bxi}^{\varepsilon}\Rightarrow\phi^0_{\bxi}\,=\,\delta_{\{\bxi\}}$ as $\varepsilon\downarrow 0$, i.e.,
\[\lim_{\varepsilon\to 0}\int h\,d\phi^{\varepsilon}_{\bxi}\,=\,h(\bxi)\,=\,\int h\,d\phi^0_{\bxi}\,,\quad\forall h\in\mathscr{C}(\R^n)\,.\]
\end{prop}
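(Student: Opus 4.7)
The plan is to observe that $\phi_{\bxi}^{\varepsilon}$ is a probability measure (as $\tau_\varepsilon = \mathrm{vol}(\B_\infty(\bxi,\varepsilon)) = \varepsilon^d$ is the normalizing constant), and then to reduce the weak-star convergence claim to a pointwise continuity statement at $\bxi$, without needing any heavier machinery such as the Portmanteau theorem.

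First, I would write, for $h \in \mathscr{C}(\R^d)$,
\[
\int h \, d\phi_{\bxi}^\varepsilon - h(\bxi) \;=\; \int \bigl(h(\x) - h(\bxi)\bigr)\,d\phi_{\bxi}^{\varepsilon}(\x)
\;=\; \frac{1}{\varepsilon^d}\int_{\B_\infty(\bxi,\varepsilon)} \bigl(h(\x) - h(\bxi)\bigr)\,d\x,
\]
using that $\phi_{\bxi}^\varepsilon(\R^d) = 1$ and that $h(\bxi)$ is a constant.

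Second, I would fix $\eta > 0$ and invoke continuity of $h$ at $\bxi$ to pick $\delta > 0$ with $|h(\x) - h(\bxi)| < \eta$ whenever $\|\x - \bxi\|_\infty \leq \delta/2$. Then for all $\varepsilon \leq \delta$, the box $\B_\infty(\bxi,\varepsilon)$ lies inside this neighborhood, so the absolute value of the displayed integral is bounded above by
\[
\frac{1}{\varepsilon^d}\int_{\B_\infty(\bxi,\varepsilon)} |h(\x)-h(\bxi)|\,d\x \;\leq\; \eta\cdot\frac{\mathrm{vol}(\B_\infty(\bxi,\varepsilon))}{\varepsilon^d} \;=\; \eta.
\]
Since $\eta$ was arbitrary, $\int h\, d\phi_{\bxi}^\varepsilon \to h(\bxi) = \int h \, d\phi_{\bxi}^0$ as $\varepsilon \downarrow 0$, as required.

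There is no real obstacle here: the proposition is essentially a Lebesgue-differentiation-style fact for continuous integrands, where no Lebesgue point subtlety arises because continuity at $\bxi$ is assumed globally. The only care is to notice that we are integrating against a \emph{probability} measure (so that subtracting the constant $h(\bxi)$ is legitimate) and that the $L^\infty$-ball has volume exactly $\varepsilon^d$, which cancels with the normalization in \eqref{def-1}.
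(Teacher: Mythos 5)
Your proof is correct and follows essentially the same route as the paper: both arguments exploit that $\phi_{\bxi}^{\varepsilon}$ is a probability measure concentrated on the shrinking box $\B_\infty(\bxi,\varepsilon)$ and reduce the claim to continuity of $h$ at $\bxi$. The only cosmetic difference is that the paper invokes the mean value theorem for integrals (writing $\int h\,d\phi_{\bxi}^{\varepsilon}=h(\bzeta_\varepsilon)$ for some $\bzeta_\varepsilon$ in the box) where you give the explicit $\eta$--$\delta$ estimate; both are valid.
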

\begin{proof}
Observe that as $h$ is continuous, 
\[\int h\,d\phi_{\bxi}^\varepsilon\,=\,h(\bzeta_{\varepsilon})\,,\quad\mbox{for some $\bzeta_{\varepsilon}\in\B_{\infty}(\bxi,\varepsilon)$,}\]
and therefore $h(\bzeta_{\varepsilon})\to h(\bxi)$ as $\varepsilon\downarrow 0$.
\end{proof}
In Proposition \ref{prop1}, the notation $\phi_{\bxi}^{\varepsilon}\Rightarrow\phi^0_{\bxi}$ is standard
and stands for the weak 
convergence of probability measures, i.e., for the weak-$\star$ topology $\sigma(\mathscr{M}(\om),\mathscr{C}(\om))$
of $\mathscr{M}(\om)$.
With $\varepsilon\geq0$, the extended Christoffel function $\tilde{\Lambda}^\mu_n:\R^d\times\R_+\to\R_+$ is defined by:
\begin{equation}
\label{new-chris}
(\bxi,\varepsilon)\mapsto \tilde{\Lambda}^{\mu}_n(\bxi,\varepsilon)
\,:=\,\displaystyle\min_{p\in\R[\x]_n}\,\{\,\int_\om p^2d\mu\,:\: \displaystyle\int p\,d\phi_{\bxi}^{\varepsilon}=1\,\}\,,\quad\bxi\in\R^d\,,
\end{equation}
where we have included the case $\varepsilon=0$  with $\phi^0_{\bxi}=\delta_{\{\bxi\}}$.
Notice that the standard Christoffel function $\Lambda^\mu_n$ satisfies
\begin{eqnarray*}
\Lambda^\mu_n(\bxi)&=&\inf_{p\in \R[\x]_n}\,\{\,\int p^2\,d\mu:\: p(\bxi)\,=\,1\,\}\\
&=&\inf_{p\in \R[\x]_n}\,\{\,\int p^2\,d\mu:\: \int p\,d\phi^0_{\bxi}\,=\,1\,\}\,=\,\tilde{\Lambda}^\mu_n(\bxi,0)\,,\quad\bxi\in\R^d\,,
\end{eqnarray*}
that is, $\tilde{\Lambda}^\mu_n(\bxi,0)=\Lambda^\mu_n(\bxi)$, for all $\bxi\in\R^d$.

\subsection*{An explicit form.}
Importantly, the extended Christoffel function $\tilde{\Lambda}^\mu_n$
is obtained as an \emph{explicit}  rational function of $\bxi$ and $\varepsilon$. More precisely,
$1/\tilde{\Lambda}^\mu_n$ is obtained as an explicit sum-of-squares (SOS) polynomial
of $(\bxi,\varepsilon)$. 
\begin{lem}
\label{lem3}
Let $\tilde{\Lambda}^\mu_n$ be as in \eqref{new-chris} and let $(P_{\balpha})_{\balpha\in\N^d_t}$ be a family of polynomials orthonormal w.r.t. $\mu$. Then the unique 
optimal solution $p^*_n\in\R[\x]_n$ of \eqref{new-chris} satisfies:
\begin{eqnarray}
\label{lem3-0}
\x\mapsto p_n^*(\x)&=&
\tilde{\Lambda}^{\mu}_{n}(\bxi,\varepsilon)\,\v_n(\x)^T\M_n(\mu)^{-1}\,\int\v_n(\y)\,d\phi^{\varepsilon}_{\bxi}(\y)\\
\label{lem3-00}&=&\tilde{\Lambda}^{\mu}_{n}(\bxi,\varepsilon)\,\int K^\mu_n(\x,\y)\,d\phi^{\varepsilon}_{\bxi}(\y)\,.
\end{eqnarray}
In addition,
\begin{equation}
\label{lem3-3}
\tilde{\Lambda}^\mu_n(\bxi,\varepsilon)^{-1}
\,=\,\tilde{\v}_n(\bxi,\varepsilon)^T\,\M_n(\mu)^{-1}\tilde{\v}_n(\bxi,\varepsilon)\,,\quad\bxi\in\R^n\,,
\end{equation}
where $\tilde{\v}_n\in\R[\x,\varepsilon]_n$ is defined by:
\begin{equation}
\label{def-tilde}
\bxi\mapsto \tilde{\v}_n(\bxi,\varepsilon)\,:=\,\frac{1}{\taue}\,\int_{\B_{\infty}(\bxi,\varepsilon)} \v_n(\y)\,d\y\,,\quad n\in\N\,.
\end{equation}
\end{lem}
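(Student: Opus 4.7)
The plan is to recognize \eqref{new-chris} as a strictly convex quadratic program with a single linear equality constraint and solve it in closed form. Writing any $p\in\R[\x]_n$ as $p(\x)=\langle\p,\v_n(\x)\rangle$, linearity of integration turns the objective into $\int p^2\,d\mu=\langle\p,\M_n(\mu)\,\p\rangle$ and the constraint into $\langle\p,\tilde{\v}_n(\bxi,\varepsilon)\rangle=1$, with $\tilde{\v}_n$ defined as in \eqref{def-tilde}. Each entry $\tau_\varepsilon^{-1}\int_{\B_{\infty}(\bxi,\varepsilon)}\y^{\balpha}\,d\y$ of $\tilde{\v}_n$ is a polynomial in $(\bxi,\varepsilon)$ of total degree at most $|\balpha|$, e.g.\ via the affine change of variables $\y=\bxi+(\varepsilon/2)\u$, so the membership $\tilde{\v}_n\in\R[\x,\varepsilon]_n^{s(n)}$ is automatic.

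Since $\M_n(\mu)\succ 0$ under the standing hypothesis, the QP is strictly convex, and it is feasible (take $p\equiv 1$), so it admits a unique minimizer which I would identify by KKT. Forming the Lagrangian $\langle\p,\M_n(\mu)\,\p\rangle-2\lambda(\langle\p,\tilde{\v}_n\rangle-1)$ and setting its gradient to zero yields the stationary point $\p^*=\lambda\,\M_n(\mu)^{-1}\tilde{\v}_n(\bxi,\varepsilon)$; substituting into the constraint gives $\lambda^{-1}=\tilde{\v}_n(\bxi,\varepsilon)^T\M_n(\mu)^{-1}\tilde{\v}_n(\bxi,\varepsilon)$. Plugging $\p^*$ into the objective then produces $\tilde{\Lambda}^\mu_n(\bxi,\varepsilon)=\langle\p^*,\M_n(\mu)\,\p^*\rangle=\lambda$, which is exactly \eqref{lem3-3}. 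Expressing $p_n^*(\x)=\langle\p^*,\v_n(\x)\rangle$ then yields \eqref{lem3-0}.

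For \eqref{lem3-00}, the key is the reproducing-kernel identity $K^\mu_n(\x,\y)=\v_n(\x)^T\M_n(\mu)^{-1}\v_n(\y)$, which follows by expressing the coefficient vectors of the orthonormal family $(P_{\balpha})$ in the monomial basis and using that, if $U$ stacks those coefficient vectors as rows, then orthonormality reads $U\,\M_n(\mu)\,U^T=I$, hence $U^TU=\M_n(\mu)^{-1}$. Integrating the identity against $\phi^{\varepsilon}_{\bxi}$ in the variable $\y$ and pulling $\v_n(\x)^T\M_n(\mu)^{-1}$ outside the integral identifies $\int K^\mu_n(\x,\y)\,d\phi^{\varepsilon}_{\bxi}(\y)$ with $\v_n(\x)^T\M_n(\mu)^{-1}\tilde{\v}_n(\bxi,\varepsilon)$, so \eqref{lem3-0} and \eqref{lem3-00} agree. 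No genuine obstacle is expected: the QP is textbook, invertibility of $\M_n(\mu)$ is part of the hypothesis, and the polynomial nature of $\tilde{\v}_n$ reduces to entrywise integration of monomials over an axis-aligned box.
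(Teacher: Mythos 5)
Your proposal is correct and follows essentially the same route as the paper: both solve the coefficient-space quadratic program by Lagrange multipliers, identify the multiplier with $\tilde{\Lambda}^{\mu}_{n}(\bxi,\varepsilon)$, and use the change-of-basis relation $U^T U=\M_n(\mu)^{-1}$ (the paper's $\mathbf{D}^T\mathbf{D}=\M_n(\mu)^{-1}$) to pass between \eqref{lem3-0} and \eqref{lem3-00}. The only cosmetic difference is that you establish $\tilde{\v}_n\in\R[\bxi,\varepsilon]_n$ via the substitution $\y=\bxi+(\varepsilon/2)\u$, whereas the paper factors $a_i^{\beta_i+1}-b_i^{\beta_i+1}$ to cancel the $\varepsilon^{-d}$; both arguments are valid.
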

\begin{proof}
 Rewrite \eqref{new-chris} 
 \[\tilde{\Lambda}^{\mu}_n(\bxi,\varepsilon)\,=\,\displaystyle\min_{\p\in\R^{s(n)}}\,\{\,\langle\p,\M_n(\mu)\,\p\rangle:\quad
 \langle\,\p\,,\int \v_n(\x)\,d\phi_{\bxi}^{\varepsilon}\,\rangle =1\,\}\,,\quad\bxi\in\R^d\,,\]
 which is a convex quadratic optimization problem. Its unique optimal solution $\p^*_n\in\R^{s(n)}$ satisfies
 \[2\,\M_n(\mu)\,\p^*_n\,=\,\lambda^*\,\int \v_n(\y)\,d\phi_{\bxi}^{\varepsilon}(\y)\,,\]
 for some scalar $\lambda^*$. Hence $\lambda^*=2\,\tilde{\Lambda}^\mu_n(\bxi,\varepsilon)$, and
 \[\p^*_n\,=\,\frac{\lambda^*}{2}\,\M_n(\mu)^{-1}\,\int \v_n(\y)\,d\phi_{\bxi}^{\varepsilon}(\y)\,,\]
 which in turn yields
 \begin{eqnarray*}
 \x\mapsto p^*_n(\x)\,=\,\langle\p^*_n,\v_n(\x)\rangle &=&\frac{\lambda^*}{2}\,\left\langle \v_n(\x),\M_n(\mu)^{-1}\,\int \v_n(\y)\,d\phi_{\bxi}^{\varepsilon}(\y)\right\rangle\\
 &=&\tilde{\Lambda}^\mu_n(\bxi,\varepsilon)\,\int K^\mu_n(\x,\y)\,d\phi_{\bxi}^{\varepsilon}(\y)\,,
 \end{eqnarray*}
 which yields  \eqref{lem3-0}-\eqref{lem3-00}. Next, using the definition \eqref{def-CD-kernel} of $\K^\mu_n$,
 and $\int P_{\balpha}\,P_{\bbeta}\,d\mu=\delta_{\balpha=\bbeta}$, for all $\balpha,\bbeta\in\N^d_n$,
  \begin{equation}
  \label{aux-50}
  \tilde{\Lambda}^\mu_n(\bxi,\varepsilon)\,=\,\int (p^*_n)^2\,d\mu\,=\,\tilde{\Lambda}^\mu_n(\bxi,\varepsilon)^2\,
  \sum_{\balpha\in\N^d_n}\left(\int_{\B_{\infty}(\bxi,\varepsilon)} P_{\balpha}(\y)\,\frac{d\y}{\taue}\right)^2\,.\end{equation}
  To obtain \eqref{lem3-3}, just use that with $(P_{\balpha}(\x))_{\balpha\in\N^d_n}=\mathbf{D}\v_n(\x)$, 
  by orthogonality of the $(P_{\balpha})$'s w.r.t. $\mu$, the resulting change of basis matrix $\mathbf{D}\in\R^{s(n)\times s(n)}$ satisfies 
  $\mathbf{D}^T\mathbf{D}=\M_n(\mu)^{-1}$.
  
  Finally it remains to prove that $\tilde{\v}_n\in\R[\bxi,\varepsilon]_n$. With $\bbeta\in\N^d_n$,
\begin{equation}
\label{expand}
\int_{\B_{\infty}(\bxi,\varepsilon)}\y^{\bbeta}\,\frac{d\y}{\taue}\,=\,\varepsilon^{-d}\prod_{i=1}^d \frac{(\xi_i+\varepsilon/2)^{\beta_i+1}-(\xi_i-\varepsilon/2)^{\beta_i+1}}{\beta_i+1}\end{equation}
is indeed a polynomial in $(\bxi,\varepsilon)$. To see this, 
in each term of the product in \eqref{expand},
use the identity $a_i^{n+1}-b_i^{n+1}=(a_i-b_i)\,(\sum_{j=0}^{n}a_i^{n-j}b_i^j)$
with $a_i=\xi_i+\varepsilon/2$ and $b_i=\xi_i-\varepsilon/2$, so that $a_i-b_i=\varepsilon$ for ever $i$, which implies that
the term $\varepsilon^{-d}$ is annihilated.
This implies that $\tilde{\v}_n(\bxi,\varepsilon)=\int_{\B_{\infty}(\bxi,\varepsilon)}\v_n(\y)\,d\y/\taue$  is a 
polynomial vector in the variables $(\bxi,\varepsilon)$.
\end{proof}
~

\subsection{Computation}~

$\bullet$ If the orthonormal polynomials $(P_{\balpha})_{\balpha\in\N^d}$ are available then just use \eqref{aux-50}
to (i)  compute the polynomials $q_{\balpha}\in\R[\bxi,\varepsilon]_n$ defined by:
\[\x\mapsto q_{\balpha}(\bxi,\varepsilon)\,:=\,\frac{1}{\taue}\int_{\B_{\infty}(\bxi,\varepsilon)} P_{\balpha}(\y)\,d\y\,,\quad \balpha\in\N^d_n\,,\]
and (ii), sum up $\sum_{\balpha\in\N^d_n}q_{\balpha}(\bxi,\varepsilon)^2$.

$\bullet$ On the other hand, if the moment matrix $\M_n(\mu)$ is available then 
(i) compute the polynomial vector $\tilde{\v}_n(\bxi,\varepsilon)\in\R[\bxi,\varepsilon]_n^{s(n)}$ in
\eqref{def-tilde} and (ii), form the SOS polynomial $\tilde{\v}_n(\bxi,\varepsilon)^T\,\M_n(\mu)^{-1}\tilde{\v}_n(\bxi,\varepsilon)/\taue^2$ to obtain
the SOS polynomial $\tilde{\Lambda}^{\mu}_{n}(\bxi,\varepsilon)^{-1}$.

In both cases, computing the polynomials $q_{\balpha}$ or $\tilde{\v}_n$ is an easy task
which can be done exactly and even symbolically in $(\bxi,\varepsilon)$, as it reduces to integrate a polynomial
on the box $\B_{\infty}(\bxi,\varepsilon)$ parametrized by $\bxi$ and $\varepsilon$. 

Therefore it is worth emphasizing that in the end, one thus obtains the polynomial $(\bxi,\varepsilon)\mapsto
\tilde{\Lambda}^\mu_n(\bxi,\varepsilon)^{-1}$ as an \emph{explicit} SOS polynomial in the variables 
$(\bxi,\varepsilon)$, via 
$\tilde{\v}_n(\bxi,\varepsilon)^T\,\M_n(\mu)^{-1}\tilde{\v}_n(\bxi,\varepsilon)$, exactly as
$\bxi\mapsto\Lambda^\mu_n(\bxi)^{-1}$ was obtained as an explicit SOS polynomial in $\bxi$, via $\v_n(\bxi)^T\,\M_n(\mu)^{-1}\v_n(\bxi)$.

 Compared with computation of the standard Christoffel function $\Lambda^\mu_n$,
computing the extended Christoffel function $\tilde{\Lambda}^\mu_n$ only requires
an extra symbolic integration  (e.g., of the vector  $\v_n(\x)=(\x^{\balpha})_{\balpha\in\N^d_n}$
on the box $\B_{\infty}(\bxi,\varepsilon)$), an easy task once (as long as the dimension $d$ is not too large).
In addition, from inspection of \eqref{expand} we may conclude that
\[\tilde{\v}_n(\bxi,\varepsilon)\,=\,\left\{\begin{array}{rl}
\v_n(\bxi)&\mbox{if $n\leq 1$,}\\
\v_n(\bxi)+O(\varepsilon)&\mbox{if $n>1$,}\end{array}\right.\]
which in turn yields:
\begin{equation}
\label{asymptotic}
\tilde{\Lambda}^\mu_n(\bxi,\varepsilon)^{-1}\,=\,\Lambda^\mu_n(\bxi)^{-1}+O(\varepsilon)\,,\quad\forall\bxi\in\R^d\,.\end{equation}
\begin{ex}
\label{ex-1}
Let $\om=[-1,1]$ and $\mu=dx/\sqrt{1-x^2}$, so that the Chebyshev polynomials
of first kind $(T_j)_{j\in\N}$ are orthogonal w.r.t. $\mu$, and the family
$(P_j)_{j\in\N}$ with $P_0=T_0/\sqrt{\pi}$ and $P_j=T_j\sqrt{2/\pi}$, $j\geq1$, is orthonormal w.r.t. $\mu$. Then $\Lambda^\mu_0(\xi)=1/\pi=\tilde{\Lambda}^\mu_0(\xi,\varepsilon)$, and 
\begin{eqnarray*}
\Lambda_1^\mu(\xi)&=&\frac{2}{\pi}\left(\,\frac{1}{2}+\xi^2\right)\,=\,\tilde{\Lambda}^\mu_1(\xi,\varepsilon)\\
\Lambda_2^\mu(\xi)&=&\frac{2}{\pi}\left(\,\frac{3}{2}-3\,\xi^2+4\,\xi^4\,\right)\\
\tilde{\Lambda}_2^\mu(\xi,\varepsilon)&=&\frac{2}{\pi}\left(\,\frac{3}{2}-3\,\xi^2+4\,\xi^4+\frac{\varepsilon^2}{2}-\frac{2\,\xi^2\varepsilon^2}{3}\right)\\
&=&\Lambda^\mu_2(\bxi)+\frac{\varepsilon^2}{\pi}\,(1-\frac{4\,\xi^2}{3})\,.
\end{eqnarray*}
\end{ex}

\subsection{A $L^2(\mu)$-norm interpretation}

Recall that $d\mu=f\,d\x$ for some unknown density $f:\om\to\R_+$, and $f\geq\gamma$ for some $\gamma>0$.
In particular, $\x\mapsto f(\x)^{-1}$ is an element of $L^2(\mu)$.

\begin{lem}
\label{lem2}
Assume that  $f\geq\gamma$ on $\om$, for some $\gamma>0$. 
Let $\bxi\in\om$ and $\varepsilon>0$ be fixed. 

(i) If $\B_\infty(\bxi,\varepsilon)\subset\om$ then the linear functional $\ell_{\bxi}^{\varepsilon}:\R[\x]\to\R$,
\[h\mapsto \ell_{\bxi}^{\varepsilon}(h):=\frac{1}{\taue}\,\int_{B(\bxi,\varepsilon)}h(\x)\,d\x\,,\quad \forall h\in \R[\x]\,,\]
is an element of $L^2(\mu)$, that we still denote $\ell_{\bxi}^{\varepsilon}$. Moreover,
it is represented by the function
\[\x\,\mapsto \frac{1_{\B_\infty(\bxi,\varepsilon)}(\x)}{\taue\,f(\x)}\in L^2(\mu)\,,\quad\forall \x\in\om\,,\]
 hence with norm
\begin{equation}
\label{lem-norm}
\Vert \ell_{\bxi}^{\varepsilon}\Vert_{L^2(\mu)}\,=\,\left(\int_{\om} \left(\frac{1_{\B_\infty(\bxi,\varepsilon)}(\x)}{\taue\,f(\x)}\right)^2\,f\,d\x\right)^{1/2}\,=\,\left\Vert \frac{1_{\B_\infty(\bxi,\varepsilon)}}{\taue\,f}\right\Vert_{L^2(\mu)}\,.
\end{equation}
(ii) In addition, if $f$ is continuous  then
\begin{equation}
\label{lem2-1}
\lim_{\varepsilon\to 0}\taue\,\Vert\ell^{\varepsilon}_{\bxi}\Vert^2_{L^2(\mu)}\,=\,\frac{1}{f(\bxi)}\,,
\quad\forall\bxi\in\mathrm{int}(\om)\,.
\end{equation}
\end{lem}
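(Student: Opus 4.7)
The plan is to prove both parts by a direct computation that rewrites the integral defining $\ell^\varepsilon_\bxi$ against $\mu$ via the density relation $d\mu = f\,d\x$ on $\om$.

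For part (i), since $\B_\infty(\bxi,\varepsilon)\subset\om$, I would multiply and divide by $f$ to obtain
\[
\ell^\varepsilon_\bxi(h) \,=\, \frac{1}{\taue}\int_{\B_\infty(\bxi,\varepsilon)} h(\x)\,d\x \,=\, \int_\om h(\x)\,\frac{1_{\B_\infty(\bxi,\varepsilon)}(\x)}{\taue\,f(\x)}\,d\mu(\x),
\]
which identifies the candidate representative $g := 1_{\B_\infty(\bxi,\varepsilon)}/(\taue\, f)$. The assumption $f\geq\gamma>0$ on $\om$ forces $g$ to be bounded by $1/(\taue\,\gamma)$ and supported in the compact set $\om$, hence $g\in L^\infty(\mu)\subset L^2(\mu)$. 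By Riesz representation, $\ell^\varepsilon_\bxi$ extends continuously to $L^2(\mu)$ and is represented by $g$. Formula \eqref{lem-norm} then drops out of
\[
\|\ell^\varepsilon_\bxi\|^2_{L^2(\mu)} \,=\, \int_\om g^2\,d\mu \,=\, \int_{\B_\infty(\bxi,\varepsilon)} \frac{f(\x)}{\taue^2\, f(\x)^2}\,d\x \,=\, \frac{1}{\taue^2}\int_{\B_\infty(\bxi,\varepsilon)}\frac{d\x}{f(\x)}.
\]

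For part (ii), multiplying the above identity by $\taue$ yields
\[
\taue\,\|\ell^\varepsilon_\bxi\|^2_{L^2(\mu)} \,=\, \frac{1}{\taue}\int_{\B_\infty(\bxi,\varepsilon)}\frac{d\x}{f(\x)} \,=\, \int \frac{1}{f}\,d\phi^\varepsilon_\bxi.
\]
For $\bxi\in\mathrm{int}(\om)$, the inclusion $\B_\infty(\bxi,\varepsilon)\subset\om$ holds for all $\varepsilon$ small enough, and continuity of $f$ combined with $f\geq\gamma>0$ makes $1/f$ continuous on a neighborhood of $\bxi$. Extending $1/f$ continuously to $\R^d$ (via Tietze, say) and applying Proposition \ref{prop1} with $h=1/f$ gives $\int (1/f)\,d\phi^\varepsilon_\bxi \to 1/f(\bxi)$, which is precisely \eqref{lem2-1}.

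There is no serious obstacle: the lemma reduces to tracking the density $f$ through the definition of $\mu$ and then invoking Proposition \ref{prop1}. The only care required is verifying that the candidate representative $g$ is genuinely in $L^2(\mu)$, for which the uniform lower bound $f\geq\gamma$ is essential, and ensuring that the box lies inside $\om$ in the limit step, which is automatic once $\bxi\in\mathrm{int}(\om)$.
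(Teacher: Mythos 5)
Your proof is correct and follows essentially the same route as the paper: part (i) is the identical multiply-and-divide-by-$f$ computation exhibiting the representative $1_{\B_\infty(\bxi,\varepsilon)}/(\taue f)$, and part (ii) reduces the limit to the weak convergence $\phi^\varepsilon_{\bxi}\Rightarrow\delta_{\bxi}$, which is the same mean-value-theorem argument the paper applies directly to $\int_{\B_\infty(\bxi,\varepsilon)}(1/f)\,d\x/\taue = 1/f(\bzeta_\varepsilon)$. The only cosmetic difference is that you route through Proposition \ref{prop1} (with a Tietze extension of $1/f$) where the paper simply repeats the mean-value step inline; both are fine.
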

\begin{proof}
(i) The linear functional $\ell_{\bxi}^\varepsilon$ is a Borel measure on $\B_{\infty}(\bxi,\varepsilon)$ with density $1/\taue$ w.r.t. Lebesgue measure on $\B_{\infty}(\bxi,\varepsilon)$. If 
$\B_{\infty}(\bxi,\varepsilon)\subset\om$ and $f\geq\gamma>0$ on $\om$, then 
\begin{eqnarray*}
\ell_{\bxi}^{\varepsilon}(h)\,=\,\frac{1}{\taue}\int_{\B_\infty(\bxi,\varepsilon)}h(\x)\,d\x&=&
\int_{\om}h(\x)\,\frac{1_{\B_\infty(\bxi,\varepsilon)}(\x)}{\taue\,f(\x)}\,f(\x)\,d\x\\
&=&\int_{\om}h(\x)\,\frac{1_{\B_\infty(\bxi,\varepsilon)}(\x)}{\taue\,f(\x)}\,d\mu\\
&=&\left\langle h,\frac{1_{\B_\infty(\bxi,\varepsilon)}}{\taue\,f}\right\rangle\,,\quad\forall h\in L^2(\mu)\,,
\end{eqnarray*}
and so the linear functional 
$\ell^\varepsilon_{\bxi}$ is represented by $\frac{1_{\B_\infty(\bxi,\varepsilon)}}{\taue\,f}\in L^2(\mu)$.
It is simply the measure on $\om$ with density $1_{\B_{\infty}(\bxi,\varepsilon)}/\taue f$ w.r.t. $\mu$, hence an element of $L^2(\mu)$ with norm $\Vert 1_{\B_{\infty}(\bxi,\varepsilon)}/\taue f\Vert_{L^2(\mu)}$.

(ii) In addition,  if $f$ is continuous then
\begin{eqnarray*}
\Vert \ell_{\bxi}^{\varepsilon}\Vert_{L^2}^2\,=\,\int_{\om} \left(\frac{1_{\B_\infty(\bxi,\varepsilon)}(\x)}{\taue\,f(\x)}\right)^2\,d\mu
&=&\int_{\om} \left(\frac{1_{\B_\infty(\bxi,\varepsilon)}(\x)}{\taue\,f(\x)}\right)^2\,f\,d\x\\
&=&\int_{\B_\infty(\bxi,\varepsilon)} \frac{1}{\taue\,f(\x)}\,\frac{d\x}{\taue}\,=\,\frac{1}{\taue\,f(\bzeta_{\varepsilon})}\,,
\end{eqnarray*}
for some $\bzeta_{\varepsilon}\in \B_{\infty}(\bxi,\varepsilon)$, which yields \eqref{lem2-1} when 
$\varepsilon\downarrow 0$ (as $f$ is continuous).
\end{proof}

\subsection{Relating $\ell^{\varepsilon}_{\bxi}$ with  $\tilde{\Lambda}^\mu_n$, and asymptotic properties}
~

We first start with  relating $\tilde{\Lambda}^\mu_n$ with $\Lambda^\mu_n$ for a fixed degree
``$n$".
\begin{prop}
\label{prop1}
Let $\varepsilon>0$ fixed, and assume that $f$ is continuous. Then
for every $n\in\N$:
  \begin{equation}
 \label{prop1-1}
  \tilde{\Lambda}^\mu_n(\bxi,\varepsilon)\,\geq\, \Lambda^\mu_n(\bzeta_{n})\,
  \quad \mbox{for some $\bzeta_n\in\B_\infty(\bxi,\varepsilon)$.}
 \end{equation}
In particular, using $\varepsilon:=1/n$ yields that for every $n\in\N$,
 \begin{equation}
 \label{prop1-2}
  \tilde{\Lambda}^\mu_n(\bxi,1/n)\,\geq\, \Lambda^\mu_n(\bzeta_{n})\,
  \quad \mbox{for some $\bzeta_n\in\B_\infty(\bxi,1/n)$.}
 \end{equation}
\end{prop}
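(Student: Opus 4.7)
The plan is to use the optimal solution $p^*_n$ of the problem defining $\tilde{\Lambda}^\mu_n(\bxi,\varepsilon)$ as a feasible point for a standard Christoffel-function problem at a suitably chosen point inside the box. The key observation is that $p^*_n$ is a polynomial, hence continuous, and its average over $\B_\infty(\bxi,\varepsilon)$ equals $1$ by construction; an intermediate-value argument then produces the required $\bzeta_n$.

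More concretely, I would first invoke Lemma \ref{lem3} (or simply the definition \eqref{new-chris}) to fix the unique minimizer $p^*_n\in\R[\x]_n$ with
\[\int p^*_n\,d\phi^\varepsilon_{\bxi}\,=\,\frac{1}{\taue}\int_{\B_\infty(\bxi,\varepsilon)}p^*_n(\x)\,d\x\,=\,1\,,\qquad \int (p^*_n)^2\,d\mu\,=\,\tilde{\Lambda}^\mu_n(\bxi,\varepsilon)\,.\]
Since the box $\B_\infty(\bxi,\varepsilon)$ is compact and connected and $p^*_n$ is continuous, the range $p^*_n(\B_\infty(\bxi,\varepsilon))$ is a compact interval containing the average value $1$. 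Therefore the intermediate value theorem yields some $\bzeta_n\in\B_\infty(\bxi,\varepsilon)$ with $p^*_n(\bzeta_n)=1$.

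Having this $\bzeta_n$, $p^*_n$ is feasible for the variational formulation \eqref{eq:variational} of $\Lambda^\mu_n(\bzeta_n)$, so
\[\Lambda^\mu_n(\bzeta_n)\,\leq\,\int (p^*_n)^2\,d\mu\,=\,\tilde{\Lambda}^\mu_n(\bxi,\varepsilon)\,,\]
which is \eqref{prop1-1}. The second assertion \eqref{prop1-2} follows by specializing $\varepsilon=1/n$.

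There is no real obstacle here: the continuity of $f$ in the hypothesis is not actually needed for this step (it is continuity of the polynomial $p^*_n$ that drives the argument), and one should perhaps only flag that the minimizer exists and is unique (which is guaranteed by $\M_n(\mu)\succ 0$ and the convex quadratic nature of the problem, already used in Lemma \ref{lem3}). The only minor care is to note that $\B_\infty(\bxi,\varepsilon)$ is connected so that the IVT applies to produce a point where $p^*_n$ equals its mean.
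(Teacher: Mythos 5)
Your proof is correct and is essentially identical to the paper's: both use the optimal $p^*_n$, the mean-value/intermediate-value argument to find $\bzeta_n\in\B_\infty(\bxi,\varepsilon)$ with $p^*_n(\bzeta_n)=1$, and then feasibility of $p^*_n$ in the variational formulation of $\Lambda^\mu_n(\bzeta_n)$. Your observation that continuity of $f$ is not needed (only continuity of the polynomial $p^*_n$) is a valid minor refinement.
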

\begin{proof}
With $p^*_n\in\R[\x]_n$  as in Lemma \ref{lem3},
\[1\,=\,\int_{\B_\infty(\bxi,\varepsilon)}p^*_n\,\frac{d\x}{\varepsilon^d}\,=\,p^*_n (\bzeta_n)\,\quad \mbox{for some $\bzeta_n\in\B_\infty(\bxi,\varepsilon)$,}\]
and so as $p^*_n(\bzeta_n)=1$,
\[\tilde{\Lambda}^\mu_n(\bxi,\varepsilon)\,=\,\int (p^*_n)^2\,d\mu\,\geq\, \inf_{p\in\R[\x]_n}\{\int p^2\,d\mu\,:p(\bzeta_n)=1\}\,=\, 
\Lambda^\mu_n(\bzeta_n)\,,\]
which yields \eqref{prop1-1} and \eqref{prop1-2}.
\end{proof}
We next consider the asymptotic limit of $\tilde{\Lambda}^\mu_n(\bxi,\varepsilon)$ as 
$n$ increases, when $\bxi\in\mathrm{int}(\om)$ and $\varepsilon>0$ is fixed, as well as when $\epsilon:=1/n^r$ with $r>0$.
\begin{thm}
\label{th1}
Assume that  $f\geq\gamma$ on $\om$, for some $\gamma>0$, and let $\tilde{\Lambda}^\mu_n$ be as in \eqref{new-chris}. 
Then : 

(i) With $\varepsilon>0$ fixed and for all $\bxi\in\om$ such that  $\B_{\infty}(\bxi,\varepsilon)\subset\om$: 
\begin{equation}
\label{th1-1}
\lim_{n\to\infty}\tilde{\Lambda}^{\mu}_n(\bxi,\varepsilon)^{-1}\,=\,\Vert \ell^{\varepsilon}_{\bxi}\Vert^2_{L^2}\,=\,\int_{\B_{\infty}(\bxi,\varepsilon)}\frac{1}{\taue\,f}\,\frac{d\x}{\taue}\,.
\end{equation}
Equivalently,
\begin{equation}
\label{th1-2}
\lim_{n\to\infty}\varepsilon^d\,\tilde{\Lambda}^{\mu}_n(\bxi,\varepsilon)^{-1}
\,=\,\int_{\B_{\infty}(\bxi,\varepsilon)}\frac{1}{f}\,\frac{d\x}{\taue}\,=\,\Vert 1/f\Vert_{L^1(\B_\infty(\bxi,\varepsilon),d\x/\taue)}\,,
\end{equation}
and in addition, if $f$ is continuous on $\B_\infty(\bxi,\varepsilon)$ then 
\begin{equation}
\label{th1-3}
\lim_{n\to\infty}\varepsilon^{-d}\,\tilde{\Lambda}^{\mu}_n(\bxi,\varepsilon)\,=\,f(\bzeta_{\varepsilon})\,,
\end{equation}
for some $\bzeta_{\varepsilon}\in\B_{\infty}(\bxi,\varepsilon)$.  In particular, $\lim_{n\to\infty}\varepsilon^{-d}\tilde{\Lambda}^{\mu}_n(\bxi,\varepsilon)\approx f(\bxi)$ when $\varepsilon$ is small.

(ii) For all $\bxi\in\mathrm{int}(\om)$ and all $n\in\N$, 
\[n^d\,\tilde{\Lambda}^{\mu}_n(\bxi,1/n)\,\geq \,
\inf_{\x}\{f(\x): \x\in\B_\infty(\bxi,1/n)\}\,,\]
so that if $f$ is continuous on $\B_\infty(\bxi,\varepsilon)$ then,
\begin{equation}
\label{th1-33}
\liminf_{n\to\infty}n^d\,\tilde{\Lambda}^{\mu}_n(\bxi,1/n)\geq f(\bxi)\,.
\end{equation} 
Moreover, if $r<1$ then
\begin{equation}
\label{th1-4}
\lim_{n\to\infty}n^d\,\tilde{\Lambda}^{\mu}_n(\bxi,1/n^r)\,=\,+\infty.
\end{equation}
\end{thm}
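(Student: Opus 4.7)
The strategy is to recognise $\tilde\Lambda^\mu_n(\bxi,\varepsilon)^{-1}$ as a truncated Parseval sum for the $L^2(\mu)$-element $\ell^\varepsilon_\bxi$ in order to extract the limits in part (i), and to apply Cauchy--Schwarz (i.e.\ Jensen) to the optimizer $p^*_n$ of Lemma \ref{lem3} in order to produce the lower bounds in part (ii).

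For (i), I would start from identity \eqref{aux-50} in the proof of Lemma \ref{lem3}, which rearranges to
$$\tilde\Lambda^\mu_n(\bxi,\varepsilon)^{-1}\,=\,\sum_{\balpha\in\N^d_n}q_\balpha(\bxi,\varepsilon)^2,\qquad q_\balpha(\bxi,\varepsilon):=\int P_\balpha\,d\phi^\varepsilon_\bxi.$$
Since $\B_\infty(\bxi,\varepsilon)\subset\om$, Lemma \ref{lem2}(i) identifies $\ell^\varepsilon_\bxi$ with the $L^2(\mu)$-function $1_{\B_\infty(\bxi,\varepsilon)}/(\taue f)$, and rewriting $q_\balpha$ against this representative gives $q_\balpha=\langle P_\balpha,\ell^\varepsilon_\bxi\rangle_{L^2(\mu)}$. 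Stone--Weierstrass on the compact set $\om$ makes polynomials dense in $\mathscr{C}(\om)$, and hence in $L^2(\mu)$, so $(P_\balpha)_{\balpha\in\N^d}$ is a complete orthonormal basis. The partial sums above are monotone nondecreasing in $n$ and their limit equals $\Vert\ell^\varepsilon_\bxi\Vert^2_{L^2(\mu)}$ by Parseval, giving \eqref{th1-1}; multiplication by $\varepsilon^d=\taue$ together with the explicit norm formula \eqref{lem-norm} yields \eqref{th1-2}; finally, if $f$ is continuous the mean value theorem applied to $1/f$ on the connected set $\B_\infty(\bxi,\varepsilon)$ produces $\bzeta_\varepsilon$ with $\int(1/f)\,d\phi^\varepsilon_\bxi=1/f(\bzeta_\varepsilon)$, and inversion gives \eqref{th1-3}.

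For (ii), the constraint $\int p^*_n\,d\phi^\varepsilon_\bxi=1$ says that $p^*_n$ has mean $1$ against the uniform probability measure on $\B_\infty(\bxi,\varepsilon)$, so Jensen yields $\int_{\B_\infty(\bxi,\varepsilon)}(p^*_n)^2\,d\x\geq\taue=\varepsilon^d$. For $\bxi\in\mathrm{int}(\om)$ and $n$ large enough (so that $\B_\infty(\bxi,\varepsilon)\subset\om$), bounding $f$ below by its infimum on the box gives
$$\tilde\Lambda^\mu_n(\bxi,\varepsilon)\,=\,\int_\om(p^*_n)^2\,f\,d\x\,\geq\,\varepsilon^d\,\inf_{\x\in\B_\infty(\bxi,\varepsilon)}f(\x).$$
Specialising $\varepsilon=1/n$ gives the stated inequality, and continuity of $f$ at $\bxi$ yields \eqref{th1-33}. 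Setting instead $\varepsilon=1/n^r$ with $r<1$ produces $n^d\tilde\Lambda^\mu_n(\bxi,1/n^r)\geq\gamma\,n^{d(1-r)}\to+\infty$, which is \eqref{th1-4}.

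The only nontrivial ingredient is the completeness of the orthonormal polynomial family in $L^2(\mu)$ that feeds Parseval in part (i); it rests on Stone--Weierstrass together with the standard density of $\mathscr{C}(\om)$ in $L^2(\mu)$, both of which hold because $\om$ is compact and $\mu$ is a finite Borel measure on $\om$. Everything else reduces to elementary convexity, the mean value theorem and substitution.
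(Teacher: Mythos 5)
Your proof is correct, but it follows a genuinely different route from the paper's on both halves. For part (i), the paper does not invoke Parseval explicitly: it proves the lower bound $\tilde{\Lambda}^\mu_n(\bxi,\varepsilon)\geq 1/\Vert\ell^\varepsilon_{\bxi}\Vert^2_{L^2(\mu)}$ by comparing the polynomial minimization with the minimization over all of $L^2(\mu)$, and then proves the matching upper bound by taking a sequence $h_n\in\R[\x]$ with $h_n\to 1_{\B_\infty(\bxi,\varepsilon)}/(\taue f)$ in $L^2(\mu)$ and verifying, via a Cauchy--Schwarz estimate using $f\geq\gamma$, that $\phi^\varepsilon_{\bxi}(h_n)\to\phi^\varepsilon_{\bxi}(h)$ so that the Rayleigh quotients converge. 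Your observation that \eqref{aux-50} rewrites as $\tilde{\Lambda}^\mu_n(\bxi,\varepsilon)^{-1}=\sum_{\balpha\in\N^d_n}\langle P_{\balpha},\ell^\varepsilon_{\bxi}\rangle^2_{L^2(\mu)}$ collapses both bounds into a single application of Parseval, resting on exactly the same density fact ($\R[\x]$ dense in $L^2(\mu)$ for $\mu$ finite on a compact set) that the paper uses for its upper bound; as a bonus it shows $\tilde{\Lambda}^\mu_n(\bxi,\varepsilon)^{-1}$ increases monotonically to its limit. For part (ii), the paper reuses the $L^2$-duality bound $\tilde{\Lambda}^\mu_n(\bxi,\varepsilon_n)\geq 1/\Vert\ell^{\varepsilon_n}_{\bxi}\Vert^2=\tau_{\varepsilon_n}f(\bzeta_{\varepsilon_n})$, whereas your Jensen argument $\int (p)^2\,d\phi^{\varepsilon}_{\bxi}\geq(\int p\,d\phi^{\varepsilon}_{\bxi})^2=1$ applied to any feasible $p$ is more elementary and delivers the stated bound $n^d\tilde{\Lambda}^\mu_n(\bxi,1/n)\geq\inf\{f(\x):\x\in\B_\infty(\bxi,1/n)\}$ directly, without routing through the representation of $\ell^{\varepsilon}_{\bxi}$. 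One shared caveat: like the paper's own proof, your part (ii) really only establishes the inequality once $n$ is large enough that $\B_\infty(\bxi,1/n)\subset\om$, which is all that is needed for \eqref{th1-33} and \eqref{th1-4}.
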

\begin{proof}
(i) By Lemma \ref{lem2}, $\ell^\varepsilon_{\bxi}\in L^2(\mu)$.
To prove \eqref{th1-1}, notice that
\[\Vert\ell_{\bxi}^{\varepsilon}\Vert^2_{L^2}\,=\,\sup_{h\in L^2(\mu)}\,\frac{\ell_{\bxi}^{\varepsilon}(h)^2}{\int h^2\,d\mu}\,,\]
and therefore,
\begin{eqnarray}
\nonumber
\frac{1}{\Vert\ell_{\bxi}^{\varepsilon}\Vert^2_{L^2}}\,=\,\inf_{h\in L^2(\mu)}\,\frac{\int h^2\,d\mu}{\ell_{\bxi}^{\varepsilon}(h)^2}
\nonumber
&=&\inf_{h\in L^2(\mu)}\,\{\int h^2\,d\mu:\: \ell_{\bxi}^{\varepsilon}(h)^2\,=\,1\}\\
\nonumber
&=&\inf_{h\in L^2(\mu)}\,\{\int h^2\,d\mu:\: \ell_{\bxi}^{\varepsilon}(h)\,=\,1\}\\
\nonumber
&\leq&\inf_{h\in \R[\x]_n}\,\{\int h^2\,d\mu:\: \ell_{\bxi}^{\varepsilon}(h)\,=\,1\}\,,\quad\forall n\in\N\\
\label{aux-dense}
&\leq&
\tilde{\Lambda}^\mu_n(\bxi,\varepsilon)\,,\quad\forall n\in\N\,,
\end{eqnarray}
and therefore, with $\varepsilon>0$ fixed, 
\begin{equation}
\label{eq:inf}
\liminf_{n\to\infty}\tilde{\Lambda}^{\mu}_n(\bxi,\varepsilon)\,\geq\,\frac{1}{\Vert \ell^{\varepsilon}_{\bxi}\Vert^2_{L^2}}\,,\quad\mbox{for all $\bxi\in\om$ s.t. $\B_{\infty}(\bxi,\varepsilon)\subset\om$}\,.\end{equation}
On the other hand, and still with $\varepsilon>0$ fixed, as $\B_{\infty}(\bxi,\varepsilon)\subset\om$, let 
$h_\varepsilon:=\frac{1_{\B_{\infty}(\bxi,\varepsilon)}}{\taue f}\in L^2(\mu)$. 
As $\om$ is compact, $\R[\x]$ is dense in $L^2(\mu)$ and therefore
there exists  a sequence $(h_n)_{n\in\N}\subset \R[\x]$ such that 
$\Vert h_n-h_\varepsilon\Vert_{L^2(\mu)}\to 0$ as $n$ increases.  Moreover, 
\begin{eqnarray*}
\phi^\varepsilon_{\bxi}(\vert h_n-h\vert)&=&\int_{\B_\infty(\bxi,\varepsilon)}\vert h_n-h_\varepsilon\vert\,\frac{d\x}{\taue}\,=\,
\int_{\B_\infty(\bxi,\varepsilon)}\frac{\vert h_n-h_\varepsilon\vert}{\taue\,f}\,f\,d\x\\
&=&\int_{\om}\frac{1_{\B_\infty(\bxi,\varepsilon)}\,\vert h_n-h_\varepsilon\vert}{\taue\,f}\,d\mu\\
&\leq&\frac{\sqrt{\mu(\om)}}{\taue\,f_*}\left(\int_{\om}(1_{\B_\infty(\bxi,\varepsilon)}\,(h_n-h_\varepsilon))^2\,d\mu\right)^{1/2}\\
&\leq&\frac{\sqrt{\mu(\om)}}{\taue\,f_*}\left(\int_{\om}(h_n-h_\varepsilon)^2\,d\mu\right)^{1/2}\,=\,\frac{\sqrt{\mu(\om)}}{\taue\,f_*}\Vert h_n-h_\varepsilon\Vert_{L^2(\mu)}\,,
\end{eqnarray*}
where $f_*=\min\,\{f(\x): \x\in\om\}$ ($>0$ as $f$ is continuous and $f>0$ on $\om$ compact). Therefore,
$\phi^\varepsilon_{\bxi}(h_n)\to\phi^\varepsilon_{\bxi}(h)$ as $n$ increases.
Then from
\[\tilde{\Lambda}^{\mu}_n(\bxi,\varepsilon)\,\leq\,\frac{\int h_n^2\,d\mu}{(\int h_n\,d\phi^{\varepsilon}_{\bxi})^2}\,,\quad\forall n\,,\]
and  since $\lim_{n\to\infty}\phi^{\varepsilon}_{\bxi}(h_n)=\phi^{\varepsilon}_{\bxi}(h)$, 
$\lim_{n\to\infty}\int h_n^2\,d\mu=\int h^2\,d\mu$,
we conclude that for all $\bxi\in\om$ with $\B_{\infty}(\bxi,\varepsilon)\subset\om$,
\[\limsup_{n\to\infty}\tilde{\Lambda}^{\mu}_n(\bxi,\varepsilon)\,\leq\,\lim_{n\to\infty}\frac{\int h_n^2\,d\mu}{(\int h_n\,d\phi^{\varepsilon}_{\bxi})^2}\,=\,
\frac{\int h^2\,d\mu}{(\int h\,d\phi^{\varepsilon}_{\bxi})^2}\,=\,\frac{1}{\Vert \ell^{\varepsilon}_{\bxi}\Vert^2_{L^2}}\,,\]
which, combined with \eqref{eq:inf}, yields \eqref{th1-1}. 
Finally, as $f>0$ on $\om$ and $f$ is continuous,
$\int_{\B_{\infty}(\bxi,\varepsilon)}\frac{1}{f}\frac{d\x}{\taue}=f(\bzeta_{\varepsilon})^{-1}$ for some
$\bzeta_{\varepsilon}\in\B_{\infty}(\bxi,\varepsilon)$, which yields \eqref{th1-2}. Moreover,
$f(\bzeta_{\varepsilon})\to f(\bxi)$ as $\varepsilon\downarrow0$ and so 
$f(\bzeta_{\varepsilon})\approx f(\bxi)$ for $\epsilon>0$ fixed and small.

(ii) Finally let $\bxi\in\mathrm{int}(\om)$ and let $\epsilon_n:=n^{-r}$ with $r>0$ so that as $n$ increases, 
$\B_{\infty}(\bxi,1/n^r)\subset\om$ for all $n$ sufficiently large, and $\tau_{\varepsilon_n}=\varepsilon_n^d=n^{-rd}$.
Recall that with $n$ fixed sufficiently large,  
\[\Vert\ell_{\bxi}^{\varepsilon_n}\Vert^2_{L^2}\,=\,1/(\tau_{\epsilon_n}f(\bzeta_{\epsilon_n}))\,=\,
n^{rd}/f(\bzeta_{\epsilon_n})\,,\]
for some $\bzeta_{\epsilon_n}\in\B_\infty(\bxi,1/n^r)$. Then for each $n$ fixed,
and with same arguments as in \eqref{aux-dense}, one obtains
\begin{eqnarray}
\nonumber
n^{-rd}\,f(\bzeta_{\epsilon_n})\,=\,\frac{1}{\Vert\ell_{\bxi}^{\varepsilon_n}\Vert^2_{L^2}}
\nonumber
&=&\inf_{h\in L^2(\mu)}\,\frac{\int h^2\,d\mu}{\ell_{\bxi}^{\varepsilon_n}(h)^2}\\
\nonumber
&=&\inf_{h\in L^2(\mu)}\,\{\int h^2\,d\mu:\: \ell_{\bxi}^{\varepsilon_n}(h)\,=\,1\}\\
\nonumber
&\leq&\inf_{h\in \R[\x]_n}\,\{\int h^2\,d\mu:\: \ell_{\bxi}^{\varepsilon_n}(h)\,=\,1\}\\
\label{growth}
&\leq&\tilde{\Lambda}^\mu_n(\bxi,1/n^{r})\,,
\end{eqnarray}
from which we deduce that with $r=1$, $n^d\,\tilde{\Lambda}^\mu_n(\bxi,1/n)\,\geq f(\bzeta_{\varepsilon_n})$, for every $n\in\N$,
and thus 
\[n^d\,\tilde{\Lambda}^\mu_n(\bxi,1/n)\,\geq \inf \{f(\x): \x\in\B_\infty(\bxi,1/n^r)\}\,,\quad\forall n\in\N\,.\]
If $f$ is continuous then $f(\bzeta_{\epsilon_n})\to\bxi$ as $n\to\infty$, and
we obtain \eqref{th1-33}. Finally,  $n^d\,\tilde{\Lambda}^\mu_n(\bxi,1/n^r)\,\geq n^{d(1-r)}f(\bzeta_{\varepsilon_n})$, for every $n\in\N$, which
yields \eqref{th1-4}.
\end{proof}
So Theorem \ref{th1} states that for fixed $\varepsilon>0$, asymptotically when $n$ increases,
$\tilde{\Lambda}^\mu_n(\bxi,\varepsilon)^{-1}$ converges to the $L^2(\mu)$-norm
of the linear functional $\ell^\varepsilon_{\bxi}\in L^2(\mu)$ which can be viewed
as an $\varepsilon$-average (or $\varepsilon$-regularization) of the point evaluation 
$\delta_{\bxi}\not\in L^2(\mu)$. However,
and as expected, $\Vert\ell^\varepsilon_{\bxi}\Vert_{L^2}$ increases with $1/\varepsilon$. 

On the other hand, for the standard Christoffel function,
$\Lambda^\mu_n(\bxi)^{-1}$ is the $L^2(\mu)$-norm of the the point evaluation
linear functional $\delta_{\bxi}$ viewed as an element of the finite dimensional subspace
$(\R[\x]_n,\langle\cdot,\cdot\rangle_{L^2})$ of $L^2(\mu)$. But then of course this norm
is unbounded as $n$ increases because $\delta_{\bxi}\not\in L^2(\mu)$.

\begin{rem}
\label{rem-regularization}
In Theorem \ref{th1}(i) one restricts to $\bxi\in\mathrm{int}(\om)$ 
such that $\B_\infty(\bxi,\varepsilon)\subset\om$. The reason is that 
if $\bxi\in\mathrm{int}(\om)$ but $\mu(\B_\infty(\bxi,\varepsilon)\cap(\R^d\setminus\om))>0$, then
for all $h\in L^2(\mu)$,
\[\int_{\B_\infty(\bxi,\varepsilon)}h\,\frac{d\x}{\varepsilon^d}\,=\,
\int_{\om} h\,\frac{1_{\B_\infty(\bxi,\varepsilon)\cap\om}}{\varepsilon^df}\,d\mu\,=\,
\langle h\,,\,\frac{1_{\B_\infty(\bxi,\varepsilon)\cap\om}}{\varepsilon^df}\rangle\,,\]
and therefore, \eqref{th1-2} becomes
\begin{equation}
\label{lim-outside}
\lim_{n\to\infty}\varepsilon^d\,\tilde{\Lambda}^{\mu}_n(\bxi,\varepsilon)^{-1}
\,=\,\int_{\B_{\infty}(\bxi,\varepsilon)\cap\om}\frac{1}{f}\,\frac{d\x}{\varepsilon^d}\,,\end{equation}
while if $f$ is continuous then \eqref{th1-3} becomes
\[\lim_{n\to\infty}\varepsilon^{-d}\,\tilde{\Lambda}^{\mu}_n(\bxi,\varepsilon)\,=\,f(\bzeta_{\varepsilon})\,\frac{\taue}{\mathrm{vol}(\B_\infty(\bxi,\varepsilon)\cap\om)}\,,\]
for some $\bzeta_\varepsilon\in\B_\infty(\bxi,\varepsilon)\cap\om$.
So now the limit in \eqref{lim-outside} is the product of 

- the nice term $f(\bzeta_\varepsilon)$ related to the density
of $f$ in $\B_\infty(\bxi,\varepsilon)$, with

- the multiplicative term
$\varepsilon^d/\mathrm{vol}(\B_\infty(\bxi,\varepsilon)\cap\om)$,
which may be hard to compute for arbitrary $\om$. 

On the other hand, the limit in \eqref{lim-outside} still converges to $f(\bxi)$ when $\varepsilon\downarrow 0$.

\end{rem}

\subsection*{Comparing with asymptotics of  the standard $\Lambda^\mu_n$.}
Recall that $s(n)={d+n\choose d}=O(n^d)$.
Notice that under some restrictive conditions, if $\bxi\in\mathrm{int}(\om)$,
\begin{equation}
\label{unknown}
\lim_{n\to\infty}s(n)\,\Lambda^\mu_n(\bxi)\,=\,\frac{f(\bxi)}{\omega_E(\x)}\,,\end{equation}
where $\omega_E(\x)$ is the (in general unknown) density of the equilibrium measure of $\om$. 
See for instance some examples in \cite[\S 9.7]{dunkl-xu}, \cite{Kroo-a,Kroo,Xu} and  \cite[\S 4.2]{lass-book} 
.\\

Then Eq. \eqref{unknown}  is to be compared with \eqref{th1-3}, i.e.,
\[\lim_{n\to\infty}\varepsilon^{-d}\,\tilde{\Lambda}^\mu_n(\bxi,\varepsilon)
\,=\,f(\bzeta_\varepsilon)\,,\quad\mbox{for some $\bzeta_{\varepsilon}\in\B_\infty(\bxi,\varepsilon)$,}\]
when $\varepsilon>0$ is fixed, $\B_{\infty}(\bxi,\varepsilon)\subset\om$, and with
\eqref{th1-33}-\eqref{th1-4}, i.e.:
\[\liminf_{n\to\infty}n^d\,\tilde{\Lambda}^\mu_n(\bxi,1/n)\,\geq\,f(\bxi)\,;\quad
\lim_{n\to\infty}n^d\,\tilde{\Lambda}^\mu_n(\bxi,1/n^r)\,=\,+\infty\quad\mbox{if $r<1$}\,,\]
when $\varepsilon =1/n$ or $\varepsilon =1/n^r$ 
is not fixed anymore.

So \eqref{unknown} is more precise but requires strong assumptions on $f$, and 
is not very helpful for practical computation since the density 
$\omega_E$ of the equilibrium measure of $\om$ is not known in general. 

On the other hand,
\eqref{th1-3} is obtained with a rather weak assumption on $f$, and  provides an information of the density $f$ in the box $\B_\infty(\bxi,\varepsilon)$, as well as an approximation of $f(\bxi)$ when $\varepsilon$ is small. However it holds for points
$\bxi\in\mathrm{int}(\om)$ whose $\Vert\cdot\Vert_\infty$-distance  
to the boundary $\partial\om$ is at least $\varepsilon/2>0$.

Notice that as indicated in \eqref{th1-4}, varying $\varepsilon$ with $n$ as $\varepsilon=1/n^r$, with $r<1$,
is not a good idea if one would like to compare the limit
of $n^d\,\tilde{\Lambda}^\mu_n(\bxi,1/n^r)$ with that of $n^d\,\Lambda^\mu_n(\bxi)$ in \eqref{unknown}.
To obtain a meaningful  bounded limit 
would require at least $\varepsilon=1/n$.

 \subsection*{Support inference}
 
 As already mentioned in the introduction, one striking property of the Christoffel function $\Lambda^\mu_n$ is the dichotomy of its 
 asymptotic behavior with $n$, depending on whether $\bxi\in\mathrm{int}(\om)$
 or $\bxi\not\in\om$. As we next show, 
 $\tilde{\Lambda}^\mu_n$ inherits the same highly desirable property. \\
 
 \noindent
 {\bf 1. Inside $\om$.} 
 First with $\varepsilon>0$ fixed, by Theorem \ref{th1}(i), 
 for every $\bxi$ such that $\B_{\infty}(\bxi,\varepsilon)\subset\om$,
 \[\lim_{n\to\infty}\varepsilon^{-d}\,\tilde{\Lambda}^\mu_n(\bxi,\varepsilon)^{-1}\,\to\,\int_{\B_{\infty}(\bxi,\varepsilon)}(1/f)\,\frac{d\x}{\taue}\,,\]
  so that the sequence $(\tilde{\Lambda}^\mu_n(\bxi,\varepsilon))_{n\in\N}$ is 
  bounded (as it converges). So clearly, $(\tilde{\Lambda}^\mu_n(\bxi,\varepsilon))^{-1}$ is bounded, uniformly in $n$.
  Next, when $\varepsilon$ is allowed to vary with $n$, like  $\varepsilon=1/n^r$ with $r>0$, then from
 \eqref{growth} in the proof of Theorem \ref{th1},
 \[\tilde{\Lambda}^\mu_n(\bxi,1/n^r)^{-1}\,\leq\,\frac{n^{rd}}{\gamma}\,,\quad\forall\bxi\in\mathrm{int}(\om)\,,\]
 that is, the growth of $\tilde{\Lambda}^\mu_n(\bxi,1/n^r)^{-1}$ is at most polynomial in
 $n$.\\

\noindent
{\bf 2. Outside $\om$.}

\begin{lem}
 \label{lem-aux}
 Let $\bxi\not\in\om$ and fix $\varepsilon>0$.
  Then the growth of  $\tilde{\Lambda}^\mu_n(\bxi,\varepsilon)^{-1}$ is at least exponential in $n$.
  Similarly, the growth of  $\tilde{\Lambda}^\mu_{2n}(\bxi,1/n)^{-1}$ is at least exponential in $n$.
  \end{lem}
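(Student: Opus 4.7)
My plan is to use the dual form
\[
\tilde{\Lambda}^\mu_n(\bxi,\varepsilon)^{-1}\,=\,\sup_{p\in\R[\x]_n}\frac{\bigl(\int p\,d\phi_{\bxi}^{\varepsilon}\bigr)^2}{\int p^2\,d\mu},
\]
which follows from \eqref{new-chris} by rescaling any feasible $p$, and to exhibit a test polynomial of the form $r^k$, where $r$ is a fixed nonnegative separating polynomial, whose Rayleigh quotient grows exponentially in $k$. This immediately yields a lower bound on $\tilde{\Lambda}^\mu_n(\bxi,\varepsilon)^{-1}$ of the desired exponential type.

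The construction exploits the hypothesis $\bxi\notin\om$. Since $\om$ is compact, $d_0:=\inf_{\y\in\om}\Vert\bxi-\y\Vert_\infty>0$. Pick $\varepsilon_0\in(0,2d_0)$ so that $\B_\infty(\bxi,\varepsilon_0)\cap\om=\emptyset$, taking additionally $\varepsilon_0\leq\varepsilon$ for the first assertion (for the second assertion, $\B_\infty(\bxi,1/n)\subset\B_\infty(\bxi,\varepsilon_0)$ holds automatically once $n$ is large enough). Tietze's extension theorem applied to the function equal to $2$ on $\B_\infty(\bxi,\varepsilon_0)$ and $0$ on $\om$, combined with the Weierstrass approximation theorem on a compact set $K\supset\om\cup\B_\infty(\bxi,\varepsilon_0)$, yields a polynomial $q\in\R[\x]_m$ of some fixed degree $m$ such that
\[
q(\x)\geq 3/2\;\;\forall\x\in\B_\infty(\bxi,\varepsilon_0),\qquad|q(\x)|\leq 1/2\;\;\forall\x\in\om.
\]
Setting $r:=q^2\in\R[\x]_{2m}$ then gives a polynomial which is nonnegative everywhere, at least $9/4$ on $\B_\infty(\bxi,\varepsilon_0)$, and at most $1/4$ on $\om$.

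For every $k\in\N$, the test polynomial $r^k\in\R[\x]_{2mk}$ satisfies
\[
\int r^{2k}\,d\mu\,\leq\,(1/16)^k\,\mu(\om),\qquad\int r^k\,d\phi_{\bxi}^{\varepsilon}\,\geq\,c_0\,(9/4)^k,
\]
where $c_0=(\varepsilon_0/\varepsilon)^d$ for the first claim (using nonnegativity of $r$ to drop the integral over $\B_\infty(\bxi,\varepsilon)\setminus\B_\infty(\bxi,\varepsilon_0)$) and $c_0=1$ for the second (since the whole box $\B_\infty(\bxi,1/n)$ then sits inside $\B_\infty(\bxi,\varepsilon_0)$). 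Substituting into the variational supremum,
\[
\tilde{\Lambda}^\mu_n(\bxi,\varepsilon)^{-1}\,\geq\,\frac{c_0^2\,(9/4)^{2k}}{(1/16)^k\,\mu(\om)}\,=\,\frac{c_0^2}{\mu(\om)}\,81^k,
\]
valid whenever $2mk$ does not exceed the allowed degree. Choosing $k=\lfloor n/(2m)\rfloor$ yields the first claim, and choosing $k=\lfloor n/m\rfloor$ (so that $r^k\in\R[\x]_{2n}$) yields the second. The only genuine obstacle is that when $\varepsilon$ is not small the averaging box $\B_\infty(\bxi,\varepsilon)$ need not lie inside the separated sub-box $\B_\infty(\bxi,\varepsilon_0)$, so a raw power $q^k$ could suffer sign cancellations inside $\int q^k\,d\phi_{\bxi}^{\varepsilon}$; passing from $q$ to $r=q^2$ is the clean way to eliminate this.
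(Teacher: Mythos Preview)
Your proof is correct and takes a genuinely different, more elementary route than the paper's. The paper first applies an affine change of variables (Lemma~\ref{lem:affine}) to send $\bxi$ to the origin and $\om$ into the unit ball, then invokes two nontrivial external ingredients: the Bernstein--Markov property of Lebesgue measure on a box (to lower-bound $\int v\,d\phi^{\varepsilon}_{\bxi}$ from $v(0)=1$) and the Kro\'o--Lubinsky \emph{needle polynomial} (to produce a polynomial that is $1$ at the origin and exponentially small on $T(\om)$). You instead build a single fixed separating polynomial $q$ by Urysohn/Tietze plus Weierstrass, square it to make $r=q^2\ge 0$, and then feed powers $r^k$ into the Rayleigh-quotient form of $\tilde{\Lambda}^\mu_n(\bxi,\varepsilon)^{-1}$; nonnegativity of $r$ lets you restrict the averaging integral to the small sub-box $\B_\infty(\bxi,\varepsilon_0)$ without any Bernstein--Markov machinery. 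The paper's approach would in principle give sharper control of constants and degree (the needle polynomial has an explicit dependence on $\delta$ and $n$), but for the purely qualitative statement of exponential growth your argument is cleaner, self-contained, and avoids importing results from the Christoffel-function literature. The one delicate point---that the averaging box $\B_\infty(\bxi,\varepsilon)$ may be large and overlap $\om$---is handled correctly by your squaring trick, which is exactly the role that squaring the needle polynomial plays in the paper's proof as well.
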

A proof is postponed to \S \ref{sec:proof}.\\

So, similarly as for the standard Christoffel function
$\Lambda^\mu_n(\bxi)$, we also obtain for $\tilde{\Lambda}^\mu_n(\bxi,\varepsilon)$ a dichotomy in 
its asymptotic behavior as $n$ increases, depending on whether
$\bxi\not\in\om$ (exponential decrease to zero) or $\bxi\in\om$ and at $\Vert\cdot\Vert_\infty$-distance
$\varepsilon$ of the boundary $\partial\om$ (convergence to a nonzero value).  
A similar result holds for $\tilde{\Lambda}^\mu_{2n}(\bxi,1/n)$ when $\epsilon>0$ is allowed to vary with $n$, like e.g., $\varepsilon=1/n$.

\section{Conclusion}
 We have introduced an extended version $\tilde{\Lambda}^\mu_n(\x,\varepsilon)$ of the standard Christoffel function 
 $\Lambda^\mu_n(\x)$ for a measure  $\mu$ on a compact set $\om\subset\R^d$. One main motivation was
 to improve the asymptotic behavior of $\Lambda^\mu_n(\x)$ as $n$ increases and $\x\in\mathrm{int}(\om)$,
 from a computational point of view.  Indeed, when $s(n)\Lambda^\mu_n(\x)$ converges (under certain assumptions), 
 the limit is of the form $f(\x)/\omega_E(\x)$ where $f$ (resp. $\omega_E$) is the density of $\mu$ 
 (resp. of the equilibrium measure of $\om$) w.r.t. Lebesgue measure. As $\omega_E$ is in general unknown,
 this result is of limited value from a computational viewpoint. In contrast, if $f$ is continuous,
 then with $\varepsilon>0$ fixed and as $n$ grows,
  $\varepsilon^{-d}\tilde{\Lambda}^\mu_n(\x,\varepsilon)\to f(\bzeta_\varepsilon)$ for some 
  $\bzeta_\varepsilon\in\B_\infty(\x,\varepsilon)$.
  So if $\varepsilon$ is small, one obtains an approximation of $f(\x)$ which converges to $f(\x)$ if $\varepsilon\downarrow 0$.
 In addition, the extended Christoffel function still inherits highly desirable properties of $1/\Lambda^\mu_n$,
 namely (i) the dichotomy of its symptotic behavior (at most polynomial growth versus 
 at least exponential growth) depending on whether $\x$ is inside or outside $\om$, and (ii) an efficient
 computation of its closed form expression as a polynomial of $(\x,\varepsilon)$.
 
 \section{Appendix}
 
 \subsection{Affine invariance of the Christoffel function}
 
 Let $\om\subset\R^d$ be compact and let $\bxi\in\R^d$ be fixed. With $r>0$, consider
 the linear transformation $T:\R^d\to\R^d$ defined by
 \begin{equation}
 \label{def-T}
 \x\,\mapsto\,T(\x)\,:=\,(\x-\bxi)/r\,,\quad \x\in\R^d\,.\end{equation}
 If $\mu$ is a Borel measure on $\om$, let $\nu$ be the pushforward measure of
 $\mu$ by the mapping $T$, that is,
 \[\nu(C)\,:=\,\mu(T^{-1}(C))\,,\quad\forall C\in\mathcal{B}(\om)\,.\]
 The support of $\nu$ is the set $T(\om)$, and
 \[\int_{T(\om)} p(\y)\,d\nu(\y)\,=\,\int_{\om}p(T(\x))\,d\mu(\x)\,,\quad\forall p\in\R[\x]\,.\]
 \begin{lem}
 \label{lem:affine}
 Let $T$ be as in \eqref{def-T} and with $\mu$ supported on $\om$, let $\nu$ be its 
 pushforward by $T$ (hence supported on $T(\om)$). Then
 \begin{eqnarray}
 \label{affine-1}
 \Lambda^\nu_n(T(\x))&=&\Lambda^\mu_n(\x)\,,\quad\forall\x\in\R^d\\
 \label{affine-2}
 \tilde{\Lambda}^\nu_n(0,\varepsilon/r)&=&\tilde{\Lambda}^\mu_n(\bxi,\varepsilon)\,,\quad\forall\bxi\in\R^d\,.
 \end{eqnarray}
 \end{lem}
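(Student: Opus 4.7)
The plan is to reduce both identities to the variational definitions of the two Christoffel functions and then use the change-of-variables formula for the pushforward measure $\nu$ together with the fact that $T$ is an affine bijection of $\R^d$, so that composition with $T$ is a linear bijection of $\R[\x]_n$ onto $\R[\y]_n$ that preserves total degree.

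For \eqref{affine-1}, I would start from
\[
\Lambda^\nu_n(T(\x))\,=\,\inf_{q\in\R[\y]_n}\,\Bigl\{\int q^2\,d\nu\,:\,q(T(\x))=1\Bigr\},
\]
and set $p:=q\circ T\in\R[\x]_n$. The pushforward identity $\int q^2\,d\nu=\int q(T(\x))^2\,d\mu=\int p^2\,d\mu$ converts the objective, while the constraint $q(T(\x))=1$ becomes $p(\x)=1$. Since $q\mapsto q\circ T$ is a bijection of $\R[\y]_n$ onto $\R[\x]_n$, the two feasible sets are in bijection with the same objective value, giving $\Lambda^\nu_n(T(\x))=\Lambda^\mu_n(\x)$.

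For \eqref{affine-2}, I would handle the objective exactly as above, and focus on rewriting the linear constraint. Using $\y=T(\x)=(\x-\bxi)/r$ one has $d\y=r^{-d}\,d\x$ and
\[
\B_\infty(0,\varepsilon/r)\,=\,\bigl\{\y:\|\y\|_\infty\le\varepsilon/(2r)\bigr\}\,=\,T(\B_\infty(\bxi,\varepsilon)),
\]
so after substitution
\[
\frac{1}{(\varepsilon/r)^d}\int_{\B_\infty(0,\varepsilon/r)}q(\y)\,d\y
\,=\,\frac{r^d}{\varepsilon^d}\int_{\B_\infty(\bxi,\varepsilon)}q(T(\x))\,r^{-d}\,d\x
\,=\,\frac{1}{\varepsilon^d}\int_{\B_\infty(\bxi,\varepsilon)}p(\x)\,d\x,
\]
where again $p=q\circ T$. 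Thus the constraint $\int q\,d\phi^{\varepsilon/r}_{0}=1$ is equivalent to $\int p\,d\phi^{\varepsilon}_{\bxi}=1$, and the same bijection of polynomial spaces yields $\tilde{\Lambda}^\nu_n(0,\varepsilon/r)=\tilde{\Lambda}^\mu_n(\bxi,\varepsilon)$.

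There is no real obstacle: the only thing to check carefully is that $T$ preserves the degree filtration (immediate because $T$ is affine) and that the Jacobian factor $r^{-d}$ cancels exactly against the ratio $(\varepsilon/r)^{-d}/\varepsilon^{-d}=r^d$ in the normalization of the uniform measure on the box. The proof is then a one-line calculation in each case.
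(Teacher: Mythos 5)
Your proposal is correct and follows essentially the same route as the paper: both reduce to the variational formulations, use the pushforward identity $\int q^2\,d\nu=\int (q\circ T)^2\,d\mu$ and the fact that $q\mapsto q\circ T$ is a degree-preserving bijection of $\R[\y]_n$ onto $\R[\x]_n$, and for \eqref{affine-2} check that $T(\B_\infty(\bxi,\varepsilon))=\B_\infty(0,\varepsilon/r)$ with the Jacobian $r^{-d}$ cancelling against the normalization $(\varepsilon/r)^{-d}$. No gaps.
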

 \begin{proof}
 \begin{eqnarray*}
 \Lambda^\nu_n(T(\x))&=&\min_{p\in\R[\y]_n}\,\{\,\int_{T(\om)}p(\y)^2\,d\nu(\y):\:p(T(\x))\,=\,1\,\}\\
 &=&\min_{p\in\R[\y]_n}\,\{\,\int_{\om}p(T(\y))^2\,d\mu(\y):\:p(T(\x))\,=\,1\,\}\,=\,\Lambda^\mu_n(\x)\,,
  \end{eqnarray*}
 where we have used that $\{p\circ T:p\in\R[\x]_n\}$ generates $\R[x]_n$.
Moreover, observe that
\[\int_{\B(0,\varepsilon/r)}p(\u)\,\frac{d\u}{(\varepsilon/r)^d}\,=\,
\int_{\B_\infty(T(\bxi),\varepsilon)}p(T(\x))\,\frac{d\x}{\varepsilon^d}\,,\]
and since $T(\bxi)=0$, we also obtain
\begin{eqnarray*}
 \tilde{\Lambda}^\nu_n(0,\varepsilon/r)&=&\min_{p\in\R[\y]_n}\,\{\,\int_{T(\om)}p(\y)^2\,d\nu(\y):\:\int_{\B_{\infty}(0,\varepsilon/r)}
 p(\u)\,\frac{d\u}{(\epsilon/r)^d}=\,1\,\}\\
 &=&\min_{p\in\R[\y]_n}\,\{\,\int_{\om}p^2(T(\x))\,d\mu(\x):\:\int_{\B_{\infty}(\bxi,\varepsilon)}p(T(\v))\,\frac{d\v}{\varepsilon^d}=\,1\,\}\\
 &=&\tilde{\Lambda}^\mu_n(\bxi,\varepsilon)\,.
 \end{eqnarray*}
 \end{proof}
 \subsection{Proof of Lemma \ref{lem-aux}}
 \label{sec:proof}
  \begin{proof}
   Let $T:\R^d\to\R^d$ be as in \eqref{def-T} with
 $r>0$ such that $T(\om)\subset\B_2(0,1)$ (where
 $\B_2(\bxi,\tau):=\{\x: \Vert \x-\bxi\Vert_2<\tau\}$),
  and let the measure $\nu$ on $T(\om)$ 
 be the pushforward of $\mu$ by $T$. 
 Observe that $T(\bxi)=0\not\in T(\om)$ since $\bxi\not\in\om$.
 By the Bernstein-Markov property
 of Lebesgue measure on the box $\B_{\infty}(0,\varepsilon/r)$, for every $\eta>0$ there exists a constant $C_\eta>0$, such that
 for all $p\in\R[\x]$,
  \[\sup\{\vert p(\x)\vert:\x\in\B_{\infty}(0,\varepsilon/r)\}\,\leq\,C_\eta\,(1+\eta)^{\mathrm{deg}(p)}\,\int_{\B_{\infty}(0,\varepsilon)}p(\x)^2\frac{d\x}{\tauer}\,,\]
 see \cite[p. 51]{lass-book}. Therefore if $p(0)=1$ then,
 \[\int_{\B_{\infty}(0,\varepsilon/r)}p(\x)^2\frac{d\x}{\tauer}\,\geq\,C_\eta^{-1}\,(1+\eta)^{-\mathrm{deg}(p)}\,.\]
Next, for every $\delta\in (0,1)$,
 the \emph{needle} polynomial $q\in\R[t^2]_{n(\delta)}$  introduced in Kr\'oo and Lubinsky \cite{Kroo},
satisfies:
 \begin{equation}
\label{aux-60}
 q(0)=1\,;\quad \vert q(\Vert\x\Vert_2)\vert\,\leq\,2\cdot 2^{-\delta\,n(\delta)}\,
  \quad\forall \x\in \B_2(0,1)\setminus\B_2(0,\delta)\,,
  \end{equation}
  and in particular,
  \[q(\Vert\x\Vert_2)^2\,\leq\,4\cdot 2^{-\delta\,2n(\delta)}\,,\quad\forall \x\in T(\om)\,,\]
  whenever $\B_2(0,\delta)\cap T(\om)=\emptyset$ (which happens whenever $\delta$ is sufficiency small
  as $0\not\in T(\om)$).
  So with $q$ as in \eqref{aux-60}, letting 
 $v=q^2\in\R[\x]_{2n(\delta)}$,  one obtains $v(0)=1$ and
 \[\int_{\B_{\infty}(0,\varepsilon/r)}v(\x)\frac{d\x}{\tauer}\,\geq\,C_\eta^{-1}\,(1+\eta)^{-n(\delta)}\,.\]
 Therefore, we deduce that
  \[\tilde{\Lambda}^\nu_{2n(\delta)}(0,\varepsilon/r)\,\leq\,\frac{\int_{T(\om)} v^2\,d\nu}{\int_{\B_\infty(0,\varepsilon/r)} v\,
  d\x/{\tauer}}\,
  \leq\,C_\eta\,(1+\eta)^{n(\delta)}\,  \int_{T(\om)} v^2\,d\nu\,,\]
  whenever $\B_2(0,\delta)\cap T(\om)=\emptyset$, and therefore 
  \[\tilde{\Lambda}^\nu_{2n(\delta)}(0,\varepsilon/r)\,\leq\,16\,C_\eta\,(1+\eta)^{n(\delta)}\,2^{-\delta\,4n(\delta)}\,.\]
  So recalling that $\bxi\not\in\om$ (hence $0\not\in T(\om)$), fix $\delta\in (0,1)$ such that 
  $\B_2(0,\delta)\cap T(\om)=\emptyset$.
    Then by choosing $\eta$ such that $a:=(1+\eta)/2^{4\delta}<1$, e.g. $\eta<4\delta\,\ln{2}$, one obtains:
    \[\tilde{\Lambda}^\nu_{2n(\delta)}(0,\varepsilon/r)\,\leq\,16\,C_\eta\,a^{n(\delta)}\,,\]
 which combined with Lemma \ref{lem:affine} yields
    \[\tilde{\Lambda}^\mu_{2n(\delta)}(\bxi,\varepsilon)\,\leq\,16\,C_\eta\,(\sqrt{a})^{2n(\delta)}\,,\]
 which in turn shows that $\tilde{\Lambda}^\mu_{2n}(\bxi,\varepsilon)$ decreases exponentially  fast to zero with $n$, 
 whenever  $\bxi\not\in\om$.
 
 Next, if $\varepsilon=1/n$ for every $n$, then agin let $\delta>0$ be such that $\B_2(0,\delta)\cap T(\om)=\emptyset$.
As we did above with $\varepsilon$ fixed, 
 \[\tilde{\Lambda}^\nu_{2n(\delta)}(0,1/n(\delta)r)\,\leq\,16\,C_\eta\,(1+\eta)^{n(\delta)}\,2^{-\delta\,4n(\delta)}\,.\]
 Again, by choosing $\eta$ such that $a:=(1+\eta)/2^{4\delta}<1$, e.g. $\eta<4\delta\,\ln{2}$, one obtains:
    \[\tilde{\Lambda}^\nu_{2n(\delta)}(0,1/n(\delta)r)\,\leq\,16\,C_\eta\,a^{n(\delta)}\,,\]
 which combined with Lemma \ref{lem:affine} yields
    \[\tilde{\Lambda}^\mu_{2n(\delta)}(\bxi,1/n(\delta))\,\leq\,16\,C_\eta\,(\sqrt{a})^{2n(\delta)}\,,\]
 which shows that $\tilde{\Lambda}^\mu_{2n}(\bxi,1/n)$ decrease exponentially  fast to zero whenever $\bxi\not\in\om$.
  \end{proof}

\end{document}